\theoremstyle{plain}
\newtheorem{theorem}{Theorem}[section]
\newtheorem*{theorem*}{Theorem}
\newtheorem{corollary}[theorem]{Corollary}
\newtheorem*{corollary*}{Corollary}
\newtheorem{proposition}[theorem]{Proposition}
\newtheorem*{prop*}{Proposition}
\newtheorem{lemma}[theorem]{Lemma}
\newtheorem*{lemma*}{Lemma}
\newtheorem{claim}[theorem]{Claim}
\newtheorem*{claim*}{Claim}
\newtheorem*{conclusion*}{Conclusion}
\newtheorem{def-lem}[theorem]{Definition-Lemma}
\theoremstyle{definition}
\newtheorem{definition}[theorem]{Definition}
\newtheorem*{definition*}{Definition}
\newtheorem*{cnst*}{Construction}
\newtheorem*{pers*}{Perspective}
\newtheorem*{dig*}{Digression}
\newtheorem*{question*}{Question}
\newtheorem*{notation*}{Notation}
\newtheorem{notation}[theorem]{Notation}
\newtheorem*{expectation*}{Expectation}
\theoremstyle{remark}
\newtheorem{remark}[theorem]{Remark}
\newtheorem*{remark*}{Remark}
\newtheorem*{example*}{Example}
\newcommand{\bb}[1]{\mathbb{#1}}
\newcommand{\mcal}[1]{\mathcal{#1}}
\newcommand{\bbZ}{\bb{Z}}
\newcommand{\bbP}{\bb{P}}
\newcommand{\super}[1]{^{(#1)}}
\newcommand{\mrm}[1]{\mathrm{#1}}
\newcommand{\mbf}[1]{\mathbf{#1}}
\newcommand{\Div}{\mrm{div}}
\newcommand{\seqdots}[3]{{#3_{#1},\dots ,#3_{#2}}}
\newcommand{\Spec}{\mrm{Spec}}
\newcommand{\supers}[1]{^{(#1)*}}
\newcommand{\supero}[1]{^{(#1)\circ}}
\newcommand{\superos}[1]{^{(#1)\circ *}}
\newcommand{\os}{^{\circ *}}
\newcommand{\GL}{\mrm{GL}}
\newcommand{\mfr}[1]{\mathfrak{#1}}
\newcommand{\msf}[1]{\mathsf{#1}}
\newcommand{\rel}{{\mathsf{rel}}}
\newcommand{\Sm}{{\mathsf{Sm}}}
\newcommand{\MSm}{{\mathsf{MSm}}}
\newcommand{\bhom}{{\mathbf{hom}}}
\newcommand{\CH}{{\mrm{CH}}}
\newcommand{\cone}{{\mathrm{cone}}}
\newcommand{\Nis}{{\mrm{Nis}}}
\newcommand{\forget}{{}}
\newcommand{\Volo}{{\mathbf{X}}}
\newcommand{\cupp}{\cdot } 
\newcommand{\Psp}{\bbP ^{r-1}}
\newcommand{\Oone}{\mcal{O}(1)}
\newcommand{\ps}{{p_*}}
\newcommand{\vertex}[2]{{v^{[#2 ]}_{#1} }}
\newcommand{\PSh}{\mathsf{PSh}}
\newcommand{\Set}{\mathsf{Set}}
\newcommand{\Ho}{\mathsf{Ho}}
\newcommand{\ch}{\mathsf{ch}}
\newcommand{\id}{\mathrm{id}}
\newcommand{\pro}{\mathrm{pro}\mathchar`-}
\newcommand{\Dtimes}{\overset{\mrm{:D}}{\otimes}}
\DeclareMathOperator{\Hom}{Hom}
\DeclareMathOperator*{\holim}{holim}
\DeclareMathOperator*{\hocolim}{hocolim}
\DeclareMathOperator{\hofib}{hofib}
\DeclareMathOperator{\HC}{HC}
\DeclareMathOperator{\HN}{HN}
\begin{document}

\title{Chern classes with modulus}



\author{Ryomei Iwasa}	\address{Department of Mathematical Sciences, University of Copenhagen, Universitetsparken 5, DK-2100 Copenhagen \O.}	\email{ryomei@math.ku.dk}
\thanks{}
\author{Wataru Kai}	\address{Mathematical Institute, Tohoku University, Aza-Aoba 6-3, Sendai 980-8578, Japan.}	\email{kaiw@tohoku.ac.jp}
\thanks{}

\subjclass[2010]{19D10, 14C35}

\maketitle

\begin{abstract}
In this paper, we construct Chern classes from the relative $K$-theory of modulus pairs to the relative motivic cohomology defined by Binda-Saito.
An application to relative motivic cohomology of henselian dvr is given.
\end{abstract}

\tableofcontents

\section*{Introduction}

Algebraic cycles with modulus have been considered to broaden Bloch's theory of algebraic cycles \cite{Bl86}.
This concept has arisen from the work by Bloch-Esnault \cite{BE}, and now it is fully generalized by Binda-Saito in \cite{BS}.
The purpose of this paper is to relate Binda-Saito's theory of algebraic cycles to algebraic $K$-theory by establishing a theory of Chern classes.
To be precise, we prove the following.
\begin{theorem}{\upshape (Theorem \ref{thm:ChernModulus}, Theorem \ref{thm:WhitneyK})}
\label{theorem0}
Let $X$ be an equidimensional scheme of finite type over a field $k$ and $D$ an effective Cartier divisor on $X$ such that $X\setminus D$ is smooth over $k$.
Then, for $i,n\ge 0$, there exist maps
\[
	\msf{C}_{n,i} \colon \mrm{K}_n(X,D) \to H^{2i-n}_{\mcal{M},\mrm{Nis}}(X|D,\bb{Z}(i))
\]
from the relative algebraic $K$-theory to the (Nisnevich) relative motivic cohomology as defined in \cite[Definition 2.10]{BS}.
These maps are functorial in $(X,D)$ in the category of modulus pairs $\MSm$ (Definition \ref{DefMSm}) and coincide with Bloch's Chern classes \cite[\S7]{Bl86} when $D=\emptyset$.
Furthermore, $\msf{C}_{n,i}$ are group homomorphisms for $n>0$ and satisfy the Whitney sum formula for $n= 0$.
\end{theorem}


Comparison maps between certain parts of relative algebraic $K$-theory and (additive) higher Chow groups with modulus had been constructed in some cases by authors such as Bloch-Esnault, R\"ulling, Park, Krishna-Levine, Krishna-Park, Krishna, R\"ulling-Saito and Binda-Krishna \cite{BE, Ru, Pa, KL, KP15, Kr, RS, BiKr} (to name a few), who reveal profound aspects of those maps.
But this is the first time a comparison map has been given on the entire (non-negative) range and in such generality.

As an application, we give a partial result for the comparison of relative algebraic $K$-theory and relative motivic cohomology for henselian dvr.

\begin{theorem}{\upshape (Theorem \ref{th:henseliandvr})}
\label{theorem1}
Let $X$ be the spectrum of a henselian dvr over a field of characteristic zero and $D$ the closed point of $X$ seen as a Cartier divisor.
Then, for every $n\ge 0$, there is a natural isomorphism
\begin{multline*}
	\{\CH^*(X|mD,n)\}_{m,\bb{Q}} \\
		\simeq \{\mrm{K}_n(X,mD)\oplus \ker(\CH^*(X|mD,n) \to \CH^*(X,n)) \}_{m,\bb{Q}}
\end{multline*}
in the category $(\pro\msf{Ab})_{\bb{Q}}$ of pro abelian groups up to isogeny.
\end{theorem}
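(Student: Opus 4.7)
The plan is to use the relative Chern character produced by Theorem~\ref{theorem0} as the comparison map, and to verify that it is a pro-rational isomorphism by reducing to the absolute case (where Bloch's theorem applies) and invoking inputs specific to characteristic zero and henselian rings.

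First I would sum the Chern classes from Theorem~\ref{theorem0} over $i$ and rationalize to obtain the relative Chern character $\ch_m \colon K_n(X, mD)_\bb{Q} \to \CH^*(X|mD, n)_\bb{Q}$, then assemble it with the tautological inclusion $L_m := \ker\bigl(\CH^*(X|mD, n) \to \CH^*(X, n)\bigr) \hookrightarrow \CH^*(X|mD, n)$ into a candidate pro-isomorphism
\[
    \phi_m \colon K_n(X, mD)_\bb{Q} \oplus L_m \longrightarrow \CH^*(X|mD, n)_\bb{Q}.
\]
Functoriality from Theorem~\ref{theorem0} then yields a commutative square
\[
    \begin{CD}
        K_n(X, mD)_\bb{Q} @>>> K_n(X)_\bb{Q} \\
        @V{\ch_m}VV @VV{\ch}V \\
        \CH^*(X|mD, n)_\bb{Q} @>>> \CH^*(X, n)_\bb{Q},
    \end{CD}
\]
whose right vertical is an isomorphism by Bloch's rational degeneration for the regular scheme $X = \Spec(A)$. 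Writing $\mrm{im}_m$ for the image of the bottom row, a short diagram chase on $0 \to L_m \to \CH^*(X|mD, n) \to \mrm{im}_m \to 0$ reduces bijectivity of $\phi_m$ in $(\pro\msf{Ab})_\bb{Q}$ to two claims: (a) $K_n(X, mD)_\bb{Q} \to K_n(X)_\bb{Q}$ is injective pro-$m$; and (b) its image matches $\mrm{im}_m$ under the isomorphism $\ch$, pro-$m$.

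For (a), I would use the fiber sequence $K(X, mD) \to K(X) \to K(A/\mfr{m}^m)$ and argue pro-vanishing of the boundary $K_{n+1}(A/\mfr{m}^m)_\bb{Q} \to K_n(X, mD)_\bb{Q}$. Picking a coefficient field $k \hookrightarrow A$, available since $A$ is equicharacteristic zero, Goodwillie's theorem splits $K_*(A/\mfr{m}^m)_\bb{Q}$ as $K_*(k)_\bb{Q}$ plus an infinitesimal piece described by negative cyclic homology; the $K_*(k)$-part lifts to $K_*(A)_\bb{Q}$ via the coefficient field, and the infinitesimal piece is controlled pro-$m$ using the henselian hypothesis. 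The main obstacle is (b), where I must show that every cycle class with modulus $mD$ is, up to $L_m$ and pro-$m$, in the image of $\ch_m$; equivalently, the image of $\CH^*(X|mD, n) \to \CH^*(X, n)$ matches, pro-$m$ rationally, the kernel of $K_n(A) \to K_n(A/\mfr{m}^m)$ under Bloch's isomorphism. I would expect to attack this by comparing both sides with cyclic-homology-with-modulus invariants in the spirit of R\"ulling--Saito and Krishna--Park, so that Goodwillie's theorem on the $K$-theory side matches an analogous computation on the Chow-with-modulus side; the henselian hypothesis and characteristic zero enter essentially at both steps.
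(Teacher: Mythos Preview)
Your reduction to claims (a) and (b) is correct and is exactly the paper's strategy in disguise: (a) is the paper's key Lemma~\ref{lem:keyK} (that $K_n(A)\to\{K_n(A/\pi^m)\}_m$ is a pro-epimorphism, applied one degree up), and (b), given (a) and the commutative square, amounts precisely to the vanishing of the composite $\Theta:=\beta\circ\ch^{-1}\circ\alpha\colon\{\CH^*(X|mD,n)\}_{m,\bb{Q}}\to\{K_n(A/\pi^m)\}_{m,\bb{Q}}$. Your sketch for (a) is in the right spirit but misses the concrete mechanism: the paper uses Artin approximation to reduce to $F[[t]]$, then Goodwillie and pro HKR to identify $\{K_n(F[t]/t^m,(t))\}_m$ with $\{tF[t]/t^m\otimes_F\Omega^{n-1}_F\}_m$ via the Poincar\'e lemma, and finally writes down explicit lifts $\{\exp(f),y_1,\dots,y_{n-1}\}$ in Milnor $K$-theory.

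The real gap is in (b), which you rightly call the main obstacle but leave open. The tools you cite (R\"ulling--Saito, Krishna--Park) are not what the paper uses; the decisive input is the Binda--Saito cycle map $\CH^i(X|mD,n)\to H^{2i-n}(\Omega^{\ge i}_{X|mD})$, combined with Weibel's compatibility between Bloch's Chern character and Goodwillie's Chern character to $\HN$. These give that $c\circ\Theta$ into $\{\HN_n(A/\pi^m)\}_{\bb{Q}}$ (identified via pro HKR with de Rham cohomology) vanishes. One is not yet done, because $c$ is not injective; the paper finishes by showing $\ker(c)\cong\ker(c_1)$ via a diagram chase through infinitesimal $K$-theory $K^{\inf}$ and Goodwillie's nil-invariance of $K^{\inf}$, together with the elementary observation that $\gamma\circ\Theta=0$ (cycles with modulus are supported off $D$, hence restrict to zero on the residue field). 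Without these specific ingredients, (b) is not established.
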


\subsection*{Acknowledgements}

When the authors were graduate students, Shuji Saito, Kanetomo Sato and Kei Hagihara encouraged the authors to learn $K$-theory techniques like the Volodin space and stability results, and moving techniques of algebraic cycles, which later turned out indispensable in carrying out this project.
Part of the work was done while one or both of the authors were staying at the Universit{\"a}t Regensburg on several occasions. 
Conversations with Federico Binda and Hiroyasu Miyazaki were helpful.
We owe a lot to the referee for improvement in exposition.
The work was supported by JSPS KAKENHI Grant Number 15J02264, 16J08843, and by the Program for Leading Graduate Schools, MEXT, Japan.

\section{Global projective bundle formula}\label{GlobalProjBund}

The aim of this section is to formulate and prove a projective bundle formula for the cycle complex with modulus as formulated in 
Theorem \ref{ProjBund} below. 
It takes place in a very global set-up. 
As such, it requires a considerable amount of effort to get all the compatibility right just to define the map. Once defined, the proof that it is an isomorphism is then a local problem and already essentially known.


\subsection{Modulus pairs and cycle complex presheaves}\label{SetUpProjBund}
We begin by the definition of the categories of modulus pairs.
\begin{definition}\label{DefMSm}
Let $k$ be an arbitrary base field.
Denote by $\MSm $ the category of pairs $(X,D)$ of an equidimensional $k$-scheme of finite type and an effective Cartier divisor on it,
such that $X^\circ := X\setminus D$ is smooth.
(Such pairs are commonly called ``modulus pairs''.)
Morphisms $f\colon (X',D')\to (X,D)$ are the morphisms of $k$-schemes $X'\to X$ which restrict to morphisms $D'\to D$ of subschemes.

We give it a (pre)topology, which we call the {\em Nisnevich topology}, by declaring that a family of morphisms
$\{ f_i\colon (X_i,D_i)\to (X,D) \} _i$ is a covering if and only if
the underlying family $\{ f_i\colon X_i\to X \} _i$ is a Nisnevich cover and $D_i=f_i^*D$ holds for all $i$.

\end{definition}

\begin{remark}
This is not the same as the category denoted by the same symbol in \cite{KSY}.
First, we ask the scheme-morphism $X'\to X$ to be defined on the entire $X'$ rather than the open part $X^{\prime\circ }$.
Second, the condition on divisors are also different:
For example, the identity morphism on $X$ induces a morphism
$(X,\emptyset )\to (X,D)$ in our category and $(X,D)\to (X,\emptyset )$ in theirs. 
We nonetheless opted to use the concise symbol $\MSm $.
\end{remark}

We would like to have a variant $\MSm ^* $ of it on which cycle complexes with modulus are functorial.
Our specific choice below is not crucial in this work.
As another possible choice of $\MSm ^*$, one could probably take a version of Levine's $\mcal{L}(\mrm{Sm} )$ \cite[p.9]{Levine}.

\begin{definition}
Fix a category $\Lambda $ with only finitely many objects and morphisms, and a functor $F\colon \Lambda \to \MSm $; $ \lambda \mapsto (X_\lambda ,D_\lambda )$.
Let $\MSm ^*:= \MSm /F$ be the site fibered over $F$ (Definition \ref{def:site_fibered} via Yoneda), with the additional condition that the underlying morphism $f\colon X\to X_\lambda $ is \'etale and that $D=f^*D_\lambda $.
\end{definition}

Since the dependence on $F$ does not play a major role in this article, we do not make it explicit in the notation.
Note that we have an obvious forgetful functor $\MSm ^*\to \MSm $ given by
$((X,D),\lambda ,f)\mapsto (X,D)$.

\begin{remark}
The principal case to have in mind is when $\Lambda $ is just a point. Let $(X,D)$ be the value of this point. 
Then our $\MSm ^*$ is nothing but the small Nisnevich site $(X,D)_\Nis $. This case is enough for constructing the Chern classes for each pair $(X,D)$.
To get the functoriality of the Chern classes as in Theorem \ref{theorem0}, we need to consider the category $\Lambda = \{ *\longrightarrow * \} $ with a unique non-identity morphism.
Larger $\Lambda $'s may be useful when one considers more involved compatibility.

\end{remark}

We refer the reader to \cite{BS}
for the definition of Binda-Saito's cycle complex with modulus $z^i(X|D,\bullet )$.

\begin{definition}
Let $i\ge 0$ be a non-negative integer.
For each $((X,D),\lambda ,f)\in \MSm ^*$, denote by 
\begin{equation*}
z^i_\rel ((X,D),\lambda ,f;\bullet )\subset z^i(X|D,\bullet )
 \end{equation*}
the subcomplex of cycles
$V\in z^i(X|D,n)$ such that for every morphism
\begin{equation*}
(g,\varphi )\colon ((X',D'),\lambda ',f')\to ((X,D),\lambda ,f) \quad \text{ in } \MSm ^*, \end{equation*}
its pull-back $g^*V\in z^i(X'|D',n)$ is well-defined.
This defines a presheaf $z^i_\rel $ of complexes on $\MSm ^*$, which we call the codimension $i$ {\em cycle complex presheaf} on $\MSm ^*$.
We remark that by a moving lemma with modulus (Theorem \ref{MovingVariant}),
the inclusion
$ z^i_\rel ((X,D),\lambda ,f ; \bullet )$ 
$\hookrightarrow z^i(X|D,\bullet )$
is a quasi-isomorphism locally in the Nisnevich topology on each $X$.
\end{definition}

Next, we want to define a presheaf of complexes $p_*z^r_\rel $ which serves as ``the cycle complex of the projective bundle associated to the universal vector bundle on $B\GL _r$''.
Defining it requires some more notation which we now introduce.

For a non-negative integer $n\ge 0$, set $[n]= \{ 0,\dots ,n \} $ and endow it with the usual order.
For a non-negative integer $r\ge 0$, let us recall (or {\em adopt the convention}) that $B\GL _r$ is the simplicial $k$-scheme $B_n\GL _r := (\GL _r)^n $ with the structure morphism associated to each ordered map $\theta \colon [m]\to [n]$:
\begin{equation*}
(\GL _r)^n \to (\GL _r)^m ; \quad
(\alpha _1,\dots ,\alpha _n )\mapsto (\alpha _{\theta (j-1)+1} \cdots  \alpha _{\theta (j)}) _{1\le j\le m} .
 \end{equation*}
The simplicial scheme $B\GL _r$ defines a simplicial presheaf on $\MSm ^*$ by
$((X,D),\lambda ,f)\mapsto B\GL _r(X)$.
%

\begin{definition}
Let $\MSm ^*/B\GL _r$ be the site fibered over 
$B\GL _r$. 
\end{definition}

Recall that the simplicial $k$-scheme $\bbP (E\GL _r)$ has $\bbP (E_n\GL _r)=\bbP ^{r-1}\times (\GL _r )^n$ as its $n$-th component, and the structure morphism corresponding to $\theta \colon [m]\to [n]$ is defined by 
\begin{equation*}
(z,\alpha _1,\dots ,\alpha _n)\mapsto (z\alpha _1\dots \alpha _{\theta (0)} ~,~\alpha _{\theta (0)+1}\dots \alpha _{\theta (1)}~,~\dots ~,~ \alpha _{\theta (m-1)+1}\dots \alpha _{\theta (m)} ), 
 \end{equation*}
where the expression $z\alpha $ for $z\in \bbP ^{r-1}$ and $\alpha \in \GL _r$ denotes the right-action of matrices on row vectors.
Write also $[\alpha ]\colon \Psp \to \Psp $ for this action, so that $[\alpha \beta ]=[\beta ][\alpha ]$ holds, and $[\alpha \beta ]^*=[\alpha ]^*[\beta ]^*$ for pull-back operations.

Denote by $p\colon \bbP (E\GL _r)\to B\GL _r$ the 
projection.
It is a projective bundle with fiber $\bbP ^{r-1}$ and defines a projective bundle (= a presheaf locally isomorphic to the constant presheaf $\bbP ^{r-1}$) on the category $\MSm ^*/B\GL _r$, which motivates the following definition.

\begin{definition}
For each $i\ge 0$, we define the presheaf $p_*z^i_\rel $ of complexes on $\MSm ^*/B\GL _r$ as follows:
For each object $((X,D),\lambda ,f;n,\alpha )\in \MSm ^*/B\GL _r$,
denote by 
\begin{equation*}
p_*z^i_\rel ((X,D),\lambda ,f;n,\alpha ;\bullet )\subset z^i(\bbP ^{r-1}\times X |  \bbP ^{r-1}\times D,\bullet )
 \end{equation*}
the subcomplex of cycles $V\in z^i(\bbP ^{r-1}\times X |  \bbP ^{r-1}\times D,m)$ such that for every morphism
$(g,\varphi )\colon ((X',D')\lambda ',f')\to ((X,D), \lambda ,f) $ in
$\MSm ^*$,
its pull-back $(\id _{\bbP ^{r-1}}\times g)^*V\in z^i(\bbP^{r-1}\times X' |  \bbP ^{r-1}\times D',m)$ is well-defined. (This does not depend on the data $(n,\alpha )$.)

Given a morphism $(g,\varphi ,\theta )\colon ((X',D'),\lambda ',f';n',\alpha ')\to ((X,D),\lambda ,f;n,\alpha )$ in $\MSm ^*/B\GL _r$, define the pull-back map
\begin{equation*}
(g,\varphi ,\theta )^*\colon p_*z^i_\rel ((X,D),\lambda ,f;n,\alpha ;\bullet )
\to p_*z^i_\rel ((X',D'),\lambda ',f';n',\alpha ';\bullet )
 \end{equation*}
to be the pull-back along the morphism
(which depends on the data $(n,\alpha )$):
\begin{equation*}\begin{array}{ccccccc}
\bbP ^{r-1}\times X' &\xrightarrow{\sim } & \bbP ^{r-1}\times X' &\xrightarrow{\id \times g} &\bbP^{r-1}\times X. \\
(z,x') &\mapsto &(z\alpha _{1}\cdots \alpha _{\theta (0)},x') &
\end{array} \end{equation*}
Lastly, denote by $p^*\colon z^i_\rel \to p_*z^i_\rel $ the map of presheaves on $\MSm ^*/B\GL _r $ given by the pull-back along the projections $\bbP ^{r-1}\times X\to X $. 
\end{definition}

The presence of $p_*z^i_\rel $ is the main reason why we work in $\MSm ^*/B\GL _r$ rather than $\MSm ^*$.
This presheaf has the information of $p_*z^i_{\bbP (E)|\bbP (E) \times _X D}$ for all objects $((X,D),\lambda ,f)\in \MSm ^*$ and vector bundles $E\to X$ in the following sense.

Let us suppose $\Lambda = \{ *\} $ for simplicity, so that $\MSm ^*\simeq (X,D)_\Nis $.
Given a vector bundle $E\to X$ of rank $r\ge 0$, let $p\colon \bbP (E)\to X$ be the associated projective bundle $\bbP (E):=\mbf{Proj} (\operatorname{Sym}_{\mcal{O}_X} E^\vee )$.
We can consider the cycle complex $z^i_{\bbP (E)|\bbP (E)\times _X D}\colon (U\xrightarrow{\text{\'et}} \bbP (E) )$ $\mapsto z^i(U|U\times _XD ,\bullet )$ on $(\bbP (E)|\bbP (E)\times _X D)_\Nis $ and
its push-forward $p_*z^i_{\bbP (E)|\bbP (E)\times _X D}$ 
to $(X,D)_\Nis $ on the one hand.

On the other hand,
if we choose an open covering $X=\bigcup _{j} X_j$ and trivialization $\phi _j\colon E_{|X_j}\cong \mcal{O}_{X_j}^r$, we get a morphism of simplicial schemes
from the \v{C}ech construction
$\phi \colon \check{C}(\{ X_j\} _j) \to B\GL _r $ 
hence a simplicial object in $(X,D)_\Nis /B\GL _r$ (here, we give $X_j$ the divisor $D_j:=D\times _X X_j$). Therefore we can consider the restriction of $p_*z^i_\rel $ to $\check{C}(\{ X_j \} _j) _\Nis $.

One verifies that these two presheaves are canonically isomorphic on $\check{C}(\{ X_j \} _j) _\Nis $.
%

\begin{remark}\label{RemNonModulus}
The following non-modulus version
\begin{equation*}
p_*z^i \colon ((X,D),\lambda ,f;n,\alpha )
\mapsto z^i(\bbP ^{r-1}\times X,\bullet ),
 \end{equation*}
with the same ``presheaf'' structure as $p_*z^i_\rel $, plays a minor role later on.
Note that since we are not assuming the smoothness of $X$ along $D$ for objects of $\MSm $, it is difficult to make $p_*z^i$ functorial in a sensible way.
However, this does not pose a real problem because $p_*z^i$ is only used as a conceptual aid in some constructions and 
can be avoided if one is willing to write down all the raw data instead.

\end{remark}

\subsection{Line bundles and codimension $1$ cycles}\label{general-procedure}

We know that codimension $1$ cycles are closely related to line bundes. 
The following version of this relationship is repeatedly used in this article.
Below, we adopt the convention $\square ^n:= (\bbP ^1\setminus \{ \infty\} )^n=\Spec (k[t_1,\dots ,t_n])$ rather than $(\bbP ^1\setminus \{ 1\})^n$ used in \cite{BS}.

Let $X_\bullet $ be a semi-simplicial scheme with flat face maps (so that cycles and non zero-divisors can always be pulled back).
Let $L$ be a line bundle on it, i.e., the data of line bundles $L_n$ on $X_n$ for $n\ge 0$ and isomorphisms $d_i^*L_{n-1}\cong L_n$ for face maps $d_i\colon X_n\to X_{n-1}$, compatible with each other in an obvious sense.
Suppose we are given a section $\sigma \in \Gamma (X_0,L_0)$ which is everywhere a non zero-divisor.
We are going to define sections
\begin{equation*}
F_n\super{\sigma }=F_n\super{\sigma }(\seqdots{1}{n}{t})\in
\Gamma (X_n,L_n)\otimes _k k[t_1,\dots ,t_n]
 \end{equation*}
on $X_n\times \square ^n$ which are non zero-divisors everywhere.

Let us denote by $v^{[n]}_k\colon [0]\to [n]$ the inclusion of the $k$-th vertex.
Note that the groups $\Gamma (X_m,L_m)\otimes _k k[\seqdots{1}{n}{t}]$ are semi-cosimplicial in $m$ and cubical in $n$.
In particular, we have sections
$(v^{[n]}_k)_* \sigma \in \Gamma (X_n,L_n).$
We define $F\super{\sigma }_n$ by the formula 
\begin{equation*}
F_n\super{\sigma }(\seqdots{1}{n}{t}):=
\sum _{k=0}^n\left( (v_k^{[n]})_*\sigma \otimes t_k \prod _{\ell =k+1}^n (1-t_\ell ) \right)
 \end{equation*}
where $t_0=1$ by convention.
Of course, it is the map corresponding to the composite of a map
$\displaystyle \square ^n\to \Delta ^n$
from the $n$-cube to the (algebraic) $n$-simplex,
followed by 
the affine map $\Delta ^n\to \Gamma (X_n,L_n)$
sending the $k$-th vertex to $(\vertex{k}{n})_*\sigma $.
\footnote{In view of this, it is probably possible to carry out the construction in this paper on the (obvious) simplicial version of the cycle complex with modulus or even on the cycle complex as a simplicial abelian group.}

Recall that for $1\le j\le n$ and $\epsilon =0,1$, the map $\partial _{j,\epsilon }\colon \square ^{n-1}\to \square ^n$
is the embedding of the face $\{ t_j=\epsilon \} $:
$(\seqdots{1}{n-1}{t})\mapsto (t_1,\dots ,t_{j-1}, \epsilon ,t_j, \dots , t_{ n-1 })$.
Degenerate elements of a cubical set mean elements obtained by pull-back along degeneracy maps $\square ^n\to \square ^{n-1}$.
Also for $0\le i\le n$, denote by $d_i\colon [n-1]\to [n]$ the $i$-th face of $[n]$.
It is routine to check the following relations.

\begin{lemma}\label{LemRecord}
%
We have equalities in $\Gamma (X_n,L_n)\otimes _k k[\seqdots{1}{n-1}{t}]$:
\begin{equation*}
d_i^* F_{n-1}\super{\sigma } = 
\begin{cases}
\partial _{1,1}^*F_n\super{\sigma } &\text{ if } i=0, \\
\partial _{i,0}^{*}F_n\super{\sigma } &\text{ if }1\le i\le n.
\end{cases}
\end{equation*}
Also, the functions $\partial ^*_{j,1}F_n\super{\sigma }$ are degenerate for $1<j\le n$.
%
\end{lemma}
Let $z^1(-,\bullet )$ be Bloch's codimension $1$ cycle complex, which is a presheaf on the category of schemes and flat morphisms.
For a scheme $X$, let $\bbZ [X]$ be the additive presheaf generated by the presheaf of sets represented by $X$.
Lemma \ref{LemRecord} implies that the set of cycles $\{ \Gamma _n\super{\sigma } \} _n := \{ \Div (F_n\super{\sigma } ) \} _n$ determines a map of presheaves of complexes
\begin{equation*}
\Gamma \super{\sigma }\colon \bbZ [X_\bullet ] \to z^1(-,\bullet )
 \end{equation*}
on the small flat site over the semi-simplicial scheme $X_\bullet $.

Let $\sigma '\in \Gamma (X_0,L_0)$ be another nowhere zero-divisor. We set
\begin{multline}\label{EqHomotopy}
F _n\super{\sigma ,\sigma '}:= t_{1} F_n\super{\sigma '}(\seqdots{2}{n+1}{t}) 
+(1-t_{1})F_n\super{\sigma }(\seqdots{2}{n+1}{t})
\\{}
\in \Gamma (X_n,L_n)\otimes _k k[\seqdots{1}{n+1}{t}].
 \end{multline}
The cycles $\Gamma _n\super{\sigma ,\sigma '}:=\Div (F_n\super{\sigma ,\sigma '})$
will serve as a homotopy of $\Gamma \super{\sigma }$ and $\Gamma \super{\sigma '}$. 

\subsubsection{Variant}

Occasionally our line bundle $L$ will be 
given as the difference $L=L^+ \otimes (L^-)^\vee $ of two line bundles,
each equipped with a nowhere zero-divisor $\sigma ^\pm \in \Gamma (X_0,L_0^\pm )$ in degree $0$.
The construction of $\Gamma \super{\sigma }$ makes sense for the ratio $\sigma = \sigma ^+/\sigma ^- $.
In this case $F_n\super{\sigma }$
is the ratio of an element in
$\Gamma (X_n,L_n^+ \otimes (L_n^-)^{\otimes n})
\otimes _k k[\seqdots{1}{n}{t}]$
and an element in
$\Gamma (X_n,(L_n^-)^{\otimes n+1})$ (for example, $F_0\super{\sigma }=\sigma ^+/\sigma ^-$ is the ratio of an element in $\Gamma (X_0,L_0^+)$ and one in $\Gamma (X_0,L_0^-)$).

Also if a second presentation $L=L^{\prime +}\otimes (L^{\prime -})^\vee $ and non zero-divisors $\sigma ^{\prime \pm }\in \Gamma (X_0,L^{\prime \pm }_0)$ are given,
the construction of the homotopy $F\super{\sigma ,\sigma '}$ makes sense to give
$F_n\super{\sigma ,\sigma '}$ as the ratio of an element in
$\Gamma (X_n, L\otimes (L^-)^{\otimes n+1} \otimes (L^{\prime -})^{\otimes n+1})\otimes _k k[\seqdots{1}{n+1}{t}]$
and an element in
$\Gamma (X_n, (L^-)^{\otimes n+1} \otimes (L^{\prime -})^{\otimes n+1})$.

We will need the next complete intersection criterion. 
For the proof, the reader is referred to Lemma \ref{Lem:ProperInt}.

\begin{lemma}\label{Lem:well-def2}
Let $L^{(1)\pm }, \dots , L^{(i)\pm }$ be $2i$ line bundles on $X_\bullet $
equipped with sections
$\sigma ^\pm _a $ of $L_0^{(a)\pm }$
which are non zero-divisors ($1\le a\le i$).
Suppose the sequence of $i$ sections
\begin{equation*}
(\vertex{k_1}{n})_* \sigma ^\pm _1 , \dots ,
(\vertex{k_i}{n})_* \sigma ^\pm _i
 \end{equation*}
is a regular sequence
for every $n\ge 0$ and every
choice of $0\le k_1\le \dots \le k_i \le n$ and signs $\pm $.
Then the cup product 
\[ \Gamma  \super{\sigma _{1}} \cupp \dots \cupp \Gamma  \super{\sigma _{i}} \colon \bbZ [X_\bullet ]\to z^i(-,\bullet ) \]
is well-defined.

\end{lemma}

Here, the cup product is defined by the concrete
formulas in Appendix \ref{Appendix:cup}.
It is said to be well-defined if all the intersection products appearing in the expression are.

\subsection{The projective bundle formula}\label{SubsecProjBundle}

Now we can state and prove the main result of this section:
\begin{theorem}{\upshape (Projective bundle formula)}
\label{ProjBund}
For every $i\ge 0$, we have a canonical isomorphism in 
the Ninevich-local derived category
$D(\MSm ^*/\forget B\GL _r)$:
\[ \bigoplus _{j=0}^{r-1}z^{i-j}_\rel \xrightarrow[\sim ]{p^*(-) \cupp \xi ^j} 
p_*z^i_\rel  .\]
\end{theorem}

\label{ConsMaps}

First, we have to construct the maps. 
The following is a consequence of the Friedlander-Lawson moving lemma \cite[Theorem 3.1]{FL98}. 
In the lemma and onwards, the superscript $(-)^\circ $ will be used to indicate that some moving procedure is involved.

\begin{lemma}\label{Friedlander-Lawson}
Let $k$ be a field and $e\ge 1$ be an integer. Let $\bbP ^m$ be the $m$-dimensional projective space over $k$ ($m\ge 0$).
Then there is a codimension $1$ cycle $H^\circ $ on $\bbP ^m$ representing $\mcal{O}(1)$ such that for every effective cycle $Z\subset \bbP ^m_{\bar{k}}$ of positive dimension and of degree $\le e$ (over $\bar{k}$), the intersection of
$Z$ and 
$H_{\bar{k}}^\circ \text{ in }\bbP^m_{\bar{k}}$ 
is proper.
\end{lemma}


Let $H\supero{1}:=\Div ( \sigma _1 ) \subset \bbP ^{r-1}$ be any hyperplane (i.e., $\sigma _1\in \mcal{O}(1)$). Applying Lemma with $e=1$ gives a codimension $1$ cycle $H\supero{2}$ on $\bbP ^{r-1}$ which intersect every linear translate of $H\supero{1}$ properly. 
$H\supero{2}$ is written as the difference of the divisor of a section $\sigma _2^+\in \mcal{O}(d+1)$ of some degree $d>0$ and that of $\sigma _2^-\in \mcal{O}(d)$. (Of course, any sections with $d\ge 2$ work for this first step without appealing to Lemma.)
Next, apply Lemma \ref{Friedlander-Lawson} with $e=d+1$ and find sections $\sigma _3^+\in \mcal{O}(d'+1)$, $\sigma _3^- \in \mcal{O}(d')$ of some large degrees so that the cycle $H\supero{3}:=\Div (\sigma _3^+/\sigma _3^-)$ satisfies the condition that for every $(\alpha _1,\alpha _2)\in \GL _r(\bar{k})^2$ the following set has codimension $3$ in $\bbP ^{r-1}_{\bar{k}}$ (the symbol $|-|$ denotes the support of a cycle): 
\begin{equation*}
[\alpha _1]^* |H\supero{1}| \ \cap \ [\alpha _2]^*|H\supero{2}| \ \cap \ |H\supero{3}|.
 \end{equation*}
This works because the intersection of the first two factors (with the reduced structure) has been assured to have codimension 2 and has degree $\le 1\cdot (d+1) $.
Next apply Lemma \ref{Friedlander-Lawson} with $e=(d+1)(d'+1)$ to get $H\supero{4}=\Div (\sigma _4^+/\sigma _4^-)$ and so on.

In the end, we get
codimension $1$ cycles $H\supero{a}=\Div (\sigma _a^+/\sigma _a^-)$
on $\Psp $ ($1\le a\le r-1$) with the property that for every
$(\alpha _1,\dots ,\alpha _{r-1})\in \GL _r(\bar{k})^{r-1}$
the intersection
\begin{equation}\label{ConsequenceFL} [\alpha _1]^*|H\supero{1}|\cap \dots \cap [\alpha _{r-1}]^*|H\supero{r-1}| \quad \text{ in }\Psp _{\bar{k}}   \end{equation}
is zero-dimensional.
In other words, the sequence $ [\alpha _1]^*\sigma _1^\pm ,\dots ,[\alpha _{r-1}]^*\sigma _{r-1}^\pm $ is a regular sequence for every possible choice of signs $\pm $.
Applying the procedure in \S \ref{general-procedure} to these data, we get cycles which we denote by $\Gamma _n \supero{a}$ ($1\le a\le r-1$):
\[ \Gamma _n \supero{a}:=\Gamma \super{\sigma _a}_n 
\in  z^1(\bbP (E_n\GL _r),n). \]

Now, let us note that giving a section $((X,D),\lambda ,f)\xrightarrow{\alpha } B_n\GL _r$ of $B\GL _r$ in degree $n$ is equivalent to giving a map
$((X,D),\lambda ,f)\times \Delta ^n \to B\GL _r$ of simplicial presheaves on $\MSm $ (here, $\Delta ^n$ is a simplicial set, not a scheme).
This motivates the following:

\begin{definition}
Let $\Delta $ be the simplicial presheaf on $\MSm ^*/B\GL _r$ given by
\[ ((X,D),\lambda ,f;n,\alpha ) \mapsto \Delta ^n \]
on objects and $(g,\varphi ,\theta )\mapsto \theta $ on morphisms.
\end{definition}

The projection $\Delta \to *$ to the singleton is a sectionwise weak equivalence of simplicial presheaves on $\MSm ^*/B\GL _r$ because $\Delta ^n$ is contractible. 

\begin{definition}\label{DefGammaoProj}
For every object $\mfr{X}:=((X,D),\lambda ,f;n,\alpha )\in \MSm ^*/B\GL _r$, a simplex $\theta \in \Delta ^n_m$ and an index $a\in \{ 1,\dots ,r-1 \} $,
denote by 
\begin{equation*}
\Gamma _m\supero{a}(\mfr{X},\theta ) \in (p_*z^1)(\mfr{X},m)= z^1(\bbP ^{r-1}\times X,m )
 \end{equation*}
the pull-back of $\Gamma _m\supero{a}$ by the map
$\{ \bbP (E\GL _r)(\theta )\times \id _{\square ^m} \} \circ \{ \id _{\Psp }\times \alpha \times \id _{\square ^m}\} $:
\[ \begin{array}{rll}  \Psp \times X\times \square ^m\xrightarrow{\alpha } 
& \Psp \times (\GL _r)^n \times \square ^m \\
{} &=\bbP (E_n\GL _r)\times \square ^m \xrightarrow[\theta ]{} &
\bbP (E_m\GL _r)\times \square ^m . \end{array} \]
Using them, we define a map of complexes 
\begin{equation*}
\Gamma \supero{a}\colon \bbZ \otimes \Delta \to \ps z^1 \hspace{30pt}\text{ on }\MSm ^*/B\GL _r
 \end{equation*}
as follows:
On an object $\mfr{X}$ as above and in degree $m$, we must 
give a map of presheaves $\bbZ \otimes \Delta ^n_m \to p_*z^1(\mfr{X},m)=z^1(\bbP ^{r-1}\times X,m)$.
We do this by mapping $\theta \in \Delta ^n_m$ to $\Gamma _m\supero{a}(\mfr{X},\theta ) $.
\end{definition}
Of course, we could have pulled back the function $F_m\super{\sigma _a^+/\sigma _a^-}$ to define a function $F_m\supero{a}(\mfr{X},\theta )$ (a ratio of nowhere zero-divisors by direct inspection) and set 
$\Gamma _m\supero{a}(\mfr{X},\theta )$ as its divisor. Both give the same cycle.

The cup product below is well-defined for every $1\le j\le r-1$:
\[ C^{j\circ }:=\Gamma \supero{1}\cupp \dots \cupp \Gamma\supero{j} \colon \bbZ \otimes \Delta \to \ps z^j \]
thanks to proper intersection \eqref{ConsequenceFL} and Lemma \ref{Lem:well-def2} applied on each object of $\MSm ^*/B\GL _r$.
We tensor both sides with $z^{i-j}_\rel $ and apply the intersection product in the Nisnevich-local derived category (Appendix \ref{interProd}):
\[  z^{i-j}_\rel \otimes \Delta \xrightarrow{\id \otimes C^{j\circ } } z^{i-j}_\rel \otimes p_*z^j 
\xrightarrow{p^*(-)\cupp (-)}
p_*z^i_\rel .  \]
Composed with the inverse of the quasi-isomorphism
$z^{i-j}_\rel \otimes \Delta \xrightarrow{\sim }z^{i-j}_\rel $,
it gives us the maps which we call $p^*(-)\cupp \xi ^j $:
\[p^*(-)\cupp \xi ^j \colon\quad z^{i-j}_\rel \to \ps z^i_\rel 
\quad \text{ in }D(\MSm ^*/B\GL _r).   \]


\begin{proof}[Proof of Theorem \ref{ProjBund}]
We now claim that the map
$ \sum _{j=0}^{r-1}\bigl( p^*(-)\cupp \xi ^j\bigr)$ 
is an isomorphism in $D(\MSm ^*/B\GL _r)$.
For this purpose, we may work locally. We consider an object $\mfr{X}=((X,D),\lambda ,f;n,\alpha )$ and assume $X$ is henselian. 
Consider the weak equivalence
$z^{i-j}_{\rel }({\mfr{X}},\bullet )\hookrightarrow z^{i-j}_{\rel }({\mfr{X}},\bullet ) \otimes \Delta ^n$ 
corresponding to the inclusion of the $0$-th vertex $*\hookrightarrow \Delta ^n$. 
One computes the composition of it with $ p^*(-)\cupp \xi ^j $
as 
$V \mapsto
(H\supero{1}\cupp \dots \cupp H\supero{j})\times  V   
$
which is well-defined for all $V$.
This gives maps $\CH ^{i-j}(X|D,m)$ $\to $ $\CH ^i(\bbP ^{r-1}\times X |  \bbP ^{r-1}\times D,m)$
on homology.

Projective bundle formula for the higher Chow groups with modulus 
is known by Krishna, Levine and Park \cite[Th.5.6]{KL}, \cite[Th.4.6]{KP}.
For pairs $(X,D)$ with $X$ henselian, their and our maps
$\CH ^{i-j}(X|D,m)$ $\to $ $\CH ^i(\bbP ^{r-1}\times X |  \bbP ^{r-1}\times D,m)$
coincide, because both are computed as the classical intersection product once we are reduced to the proper intersection case.
This completes the proof of Theorem \ref{ProjBund}.
\end{proof}

\section{The universal Chern classes}\label{UnivChern}

A key ingredient of Chern classes in Bloch's higher Chow groups is the $r$-th power 
\begin{equation*}
\xi ^r\in \CH ^r(\bbP (E\GL _r)) \end{equation*}
of the class $\xi := [\mcal{O}(1)]$ on the simplicial scheme $\bbP (E\GL _r)$. 
By local homotopy theory, 
it corresponds to a map
$\xi ^r\colon \bbZ \to p_*z^r $ in $D(\Sm ^*/B\GL _r)$,
where $\Sm ^*$ is the non-modulus version of $\MSm ^*$.
In the first half of this section (\S\S \ref{SecStandardHyperplane}--\ref{SecSpecialization}), 
we construct a map
\begin{equation*}
\xi ^r_\rel \colon \bbZ \to p_*z^r_\rel \quad \text{ in }D(\MSm ^*/\Volo _r^\rel ) .
 \end{equation*}
Here $\Volo _r^\rel \subset B\GL _r$ is a subpresheaf called the relative Volodin space, 
which is known to represent the relative $K$-theory upon $\bbZ $-completion (\S\S \ref{SecVolodin}, \ref{ChernClassKrel}).
This map is a lifting of the classical $\xi ^r$ in the following sense.
In the underlying datum $F\colon \Lambda \to \MSm $, let $\Lambda _\emptyset \subset \Lambda $ be the full subcategory of objects $\lambda $ such that $D_\lambda =\emptyset $. 
Then $F$ restricts to $F_\emptyset \colon \Lambda _\emptyset \to \Sm $; $\lambda \mapsto X_\lambda $,
so we define $\Sm ^*:= \Sm /F_\emptyset $.
If one remembers (or {\em interprets}) how to construct the classical $\xi ^r$ correctly, it turns out that our $\xi ^r_\rel $ lifts $\xi ^r $ via the restriction map
\begin{equation*}
\Hom _{D(\MSm ^*/\Volo _r^\rel )}(\bbZ ,p_*z^r_\rel )
\to 
\Hom _{D(\Sm ^*/B\GL _r )}(\bbZ ,p_*z^r ).
 \end{equation*}


In the latter half (\S\S \ref{ChernClassXrel}--\ref{ChernClassKrel}), we discuss the stabilization process $r\to \infty $ and $\bbZ $-completion to derive Chern class maps 
\begin{equation*}
\msf{C}_{n,i}\colon K_n(X,D)\to H^{-n}_\Nis ((X,D) ,z^i_\rel ). 
 \end{equation*}

\subsection{Hyperplanes}\label{SecStandardHyperplane}

We often consider data $(\mfr{X},\theta )$ of an object $\mfr{X}=((X,D),\lambda ,f;n,\alpha )$ $\in \MSm ^*/B\GL _r$ and an ordered map $\theta \colon [m]\to [n]$.

\subsubsection{The standard hyperplanes}

Let $\seqdots{1}{r}{T}\in \Gamma (\bbP ^{r-1},\mcal{O}(1))$ be the homogeneous coordinates on $\bbP ^{r-1}$.
We apply \S \ref{general-procedure} to the line bundle $\mcal{O}(1)$ on $\bbP (E\GL _r) $ and sections $T_a$ for $a\in \{ 1,\dots ,r \}$ to get functions
\begin{equation*}
F_n\supers{a} := F_n\super{T_a} \in \Gamma (\bbP (E_n\GL _r),\mcal{O}(1))\otimes _k k[\seqdots{1}{n}{t}]
 \end{equation*}
and their divisors $\Gamma _n\supers{a}:= \Div (F_n\supers{a})$
$\in z^1(\bbP (E\GL _r),n)$.
Here we use superscripts $(-)^*$ because they will be useful only over certain open subsets which will be indicated by the same superscripts.

Repeat the construction of Definition \ref{DefGammaoProj} on these data to define cycles
$\Gamma \supers{a}_m(\mfr{X},\theta )\in z^1(\bbP ^{r-1}\times X,m)$.
%
They determine a map of complexes 
\begin{equation}\label{EqGammas}
\Gamma _{\MSm }\supers{a}\colon \bbZ \otimes \Delta \to p_*z^1\quad \text{ on }\MSm ^*/B\GL _r.
 \end{equation}

\begin{remark}
One may want to construct $\xi ^r_\rel $ as the cup product
$\Gamma \supers{1}_{\MSm }\cupp \dots \cupp \Gamma \supers{r}_{\MSm }$: $\bbZ \otimes \Delta \to \ps z^r$.
But the cup product is not well-defined due to the failure of proper intersection.
The easiest such example would be, taking $\Lambda =\{ *\}$: $r=2$, $\mfr{X}=((X,\emptyset );$ $n=1, \alpha =\begin{pmatrix}0 &1 \\ 1 &0 \end{pmatrix}\in (\Volo _2^\rel )_1)$, $\theta =\mrm{id}_{[1]}$. In this case the cycle that is supposed to represent the $(\mfr{X},\theta ) $-component of $\Gamma \supers{1}_{\MSm }\cupp \Gamma \supers{2}_{\MSm }$ always does not satisfy the face condition.
This is why we need the following constructions.

\end{remark}

\subsubsection{The generic hyperplanes}

Let $\{ x_{ab}\} _{1\le a,b\le r}$ be the coordinates of $\GL _r$, so that its function field is $k(\GL _r)=k(\{ x_{ab} \} _{a,b})$.
For each $a\in \{ 1,\dots ,r\} $, let us consider the ``generic translation'' of the coordinates:
$ T^\circ _a:= \sum _{b=1}^r T_b x_{ba}\in \Gamma (\Psp _{k(\GL _r)},\Oone ) . $ 
As in \S \ref{SubsecProjBundle}, we are using the superscript $(-)^\circ $ to indicate the involvement of moving procedure.
Applying the construction in \S \ref{general-procedure} 
and Definition \ref{DefGammaoProj} on $T^\circ _a$, we define cycles:
\begin{equation*}
\Gamma _{m, k(\GL _r)}\supero{a}(\mfr{X},\theta ) 
\in z^1((\bbP ^{r-1}\times X)_{k(\GL _r)},m).
 \end{equation*}
These objects are different from those denoted by similar symbols in Definition \ref{DefGammaoProj}, but since the older ones are not going to be used again in this section, there is no risk of confusion.

\subsubsection{Homotopy of the hyperplanes}

The homotopy in equation \eqref{EqHomotopy} gives us cycles
\begin{equation*}
\Gamma \superos{a}_{n,k(\GL _r)}:= 
\Gamma \super{T^\circ _a,T_a}_{n}
\in z^1(\bbP (E_n\GL _r)_{k(\GL _r)},n+1)
 \end{equation*}
and by pull-back as in 
Definition \ref{DefGammaoProj}, we define
\begin{equation*}
\Gamma _{m,{k(\GL _r)}}\superos{a}(\mfr{X},\theta ) 
\in z^1(\bbP ^{r-1}\times X_{k(\GL _r)} , m+1).
 \end{equation*}

\begin{definition}
For a field extension $L/k$, denote by $p_*z^i_L$ (the {\em non-modulus cycle complex with scalar extension}) the association of complexes
$\mfr{X}\mapsto p_*z(\mfr{X}\otimes _k L)$
for objects of $\MSm ^*/B\GL _r$.
There is an obvious scalar extension map $p_*z^i \to p_*z^i_L$.
\end{definition}
As in Remark \ref{RemNonModulus}, this is not a presheaf on $\MSm ^*/B\GL _r $ unless one restricts to some nice subcategory.
The set of cycles $\{ \Gamma _{m,k(\GL _r)}\supero{a}(\mfr{X},\theta ) \} _{m,\mfr{X},\theta }$ determines a map of complexes
\begin{equation*}
\Gamma _{\MSm ,k(\GL _r)}\supero{a} \colon \bbZ \otimes \Delta \to p_*z^1_{k(\GL _r)} .
 \end{equation*}

\subsection{The open covering and the complex $\mcal{Z}$}\label{TheComplZ}

\begin{definition}\label{F_iota}
Let $S\subset [n]$ be a subset consisting of $m+1$ elements.
We denote the unique injection $[m]\hookrightarrow [n]$ into $S$ also by the same letter.
Let us denote by 
$\Gamma \supers{a}(S)$
the pull-back of 
$\Gamma \supers{a}_m$ by the map
\begin{equation*}
\bbP (E\GL _r)(S)\times \id _{\square ^m} :\quad 
\bbP (E_n\GL _r)\times \square ^m\to \bbP (E_m\GL _r)\times \square ^m.
 \end{equation*}
Of course, it is the divisor of the function $F\supers{a}(S)$ similarly defined.

\end{definition}

\begin{definition}\label{DefBGL*}
Let $B_n\GL _r^*$ be the following open subset of $B_n\GL _r$:
\[  B_n\GL _r^*:= B_n\GL _r \setminus p \left( \bigcup _{0\le k_1\le \dots \le k_r \le n} 
\Gamma \supers{1}(v_{k_1}^{[n]})  
\cap \dots \cap 
\Gamma \supers{r}( v_{k_r}^{[n]})  \right)   \]
where $p\colon \bbP (E_n\GL _r)=\bbP ^{r-1}\times B_n\GL _r\to B_n\GL _r$ is the second projection. 
\end{definition}
It follows that a point $\alpha =(\seqdots{1}{n}{\alpha })\in B_n\GL _r (k(\alpha ))$ is in $B_n\GL _r^*$ if and only if for every choice of $0\le k_1\le \dots \le k_r \le n$, the intersection in $\bbP ^{r-1}_{k(\alpha )}$
\[ [\alpha _{1}\alpha _2\cdots \alpha _{k_1}]^*\{ T_1=0\} \cap \dots 
\cap [\alpha _{1}\alpha _2\cdots \alpha _{k_r}]^*\{ T_r=0\}   \]
is empty.
Note that whenever $\seqdots{1}{n}{\alpha }$ are all upper triangular, the sequence $(\seqdots{1}{n}{\alpha })$ belongs to $B_n\GL _r^*$. 
The simplicial structure of $B\GL _r $ restricts to 
the schemes $\{ B_n\GL _r^* \} _n$. 
%

\begin{definition}
For a datum $(\mfr{X},\theta ) $ of an object $\mfr{X}\in \MSm ^*/B\GL _r $ and a map $\theta \colon [m]\to [n]$, define an open subset $X^*_{\alpha ,\theta }$ of $X$ by
$X^*_{\alpha ,\theta }:=(B\GL _r(\theta )\circ \alpha )^{-1}(B_n\GL _r^* ). $  
\end{definition}

\begin{definition}
Define the simplicial subpresheaves $\Delta ^* $ and $\Delta ^\circ $ of $\Delta $ on $\MSm ^*/B\GL _r$ by (where $\mfr{X}=((X,D),\lambda ,f;n,\alpha )$):
\begin{equation*}
\Delta ^*(\mfr{X})_m= \{ \theta \in \Delta ^n_m \mid X_{\alpha ,\theta }^*=X  \} ,
\qquad \Delta ^\circ (\mfr{X}) =\begin{cases} 
\Delta ^n &\text{ if }D=\emptyset ,\\
\emptyset &\text{ if }D\neq \emptyset .
\end{cases}
 \end{equation*}
Also, set $\Delta \os := \Delta ^\circ \cap \Delta ^*$.
\end{definition}
Let us say that a morphism of the form $(f,\id ,\id )\colon \mfr{X}'\to \mfr{X}$ in $\MSm ^*/B\GL _r$ is an open immersion if the underlying morphism $f\colon X'\to X$ is an open immersion and if $D'=X'\times _X D $. Then $\Delta ^\circ $ and $\Delta ^*$ are open subpresheaves of $\Delta $.
The map $\Delta ^\circ \sqcup \Delta ^*\to \Delta $ is not a surjection of Zariski sheaves, but becomes so on the smaller category $\MSm ^*/\Volo _r^\rel $ introduced in \S \ref{SecVolodin}.
Consequently, the complex $\mcal{Z}$ below becomes quasi-isomorphic to $\bbZ $ on that category.

\begin{definition}
Define the complex $\mcal{Z}$ on $\MSm ^*/B\GL _r$ by:
\[ \mcal{Z}:=\cone \left( \bbZ \otimes \Delta \os 
\xrightarrow[(\mrm{incl.},\mrm{incl.})]{}
(\bbZ \otimes \Delta ^\circ) \oplus (\bbZ \otimes \Delta ^*)\right) . \]
\end{definition}
The maps $\Gamma _{\MSm ,k(\GL _r)}\supero{a}$ on $\bbZ \otimes \Delta ^\circ $, $\Gamma _{\MSm }\supers{a}$ on $ \bbZ \otimes \Delta ^*$ and the homotopy $\{ \Gamma _{m,k(\GL _r)}\superos{a}  \} _m$ on $\bbZ \otimes \Delta \os $ determine a map of complexes
\begin{equation}\label{EqGamma}
\Gamma _{\MSm ,k(\GL _r)}\super{a}\colon \mcal{Z}\to p_*z^1_{k(\GL _r)}.
 \end{equation}
The next lemma follows from the definition of $B\GL _r^*$ and the algebraic independence of $\{ x_{ab} \} _{a,b}$.
\begin{lemma}\label{LemIntEmpty}
\leavevmode
\begin{enumerate}[{\upshape (i)}]
\item 
The intersection of
$\displaystyle \bigcap _{a=1}^r \Gamma _{0,k(\GL _r)} \supero{a}(\mfr{X} ,v_{\theta (k_a)}^{[n]} )  
$ 
and $(\bbP ^{r-1}\times X^\circ )_{k(\GL _r)}$ is empty for every choice of $0\le k_1\le \dots \le k_r \le n $.

\item
The intersection of
$\displaystyle  \bigcap _{a=1}^r \Gamma _{0} \supers{a}(\mfr{X} ,v_{\theta (k_a)}^{[n]} )
$ 
and $\bbP ^{r-1}\times X_{\alpha ,\theta }^* $ is empty for every choice of $0\le k_1\le \dots \le k_r \le n $.

\item
The intersection of $(\bbP ^{r-1}\times X_{\alpha ,\theta }\os )_{k(\GL _r)}$ and: 
\begin{equation*}
\left( \bigcap _{a=1}^b \Gamma _{0,k(\GL _r)} \supero{a}(\mfr{X} ,v_{\theta (k_a)}^{[n]} )  \right)
\cap
\left( \bigcap _{a=b+1}^r \Gamma _{0} \supers{a}(\mfr{X} ,v_{\theta (k_a)}^{[n]} )_{k(\GL _r)} \right)
 \end{equation*}
is empty for every choice of $0\le k_1\le \dots \le k_r \le n $
and $0\le b\le r $.
\end{enumerate}\end{lemma}
By Lemma \ref{LemIntEmpty}, we can apply Lemmas \ref{Lem:well-def2} and \ref{Lem:ProperInt2} to conclude that the cup product
\begin{equation}\label{NonModXi^r} \Gamma \super{1}_{\MSm }\cupp \dots \cupp \Gamma \super{r}_{\MSm }\colon \mcal{Z}\to \ps z^r_{k(\GL _r)}  
\end{equation}
is well-defined.
We now introduce a modulus version $p_*z^i_{\rel ,k(\GL _r)}\subset p_*z^i_{k(\GL _r)}$ of the target.
We shall show that the map \eqref{NonModXi^r} factors through it 
when restricted to the category $\MSm ^*/\Volo _r^\rel $ introduced in \S \ref{SecVolodin}.

\label{ModifOutside}

\begin{definition}
For a field extension $L/k$, define the presheaf of complexes $\ps z^i_{\rel ,L}$ ({\em cycle complex with modulus with scalar extension}) on $\MSm ^*/B\GL _r$ by the rule
\[ \mfr{X}=((X,D),\lambda ,f)\mapsto \begin{cases}\ps z^i_\rel (\mfr{X}_L ) &\text{ if }D=\emptyset , \\ \ps z^i_\rel (\mfr{X} )&\text{ if }D\neq \emptyset  \end{cases}  \]
with the same presheaf structure as 
$\ps z^i_\rel $.
There is a scalar extension map $\ps z^i_\rel \to \ps z^i_{\rel ,L}$.
\end{definition}
This definition may appear strange at first glance. It is motivated by the fact that Bloch's specialization map \S \ref{SecSpecialization} is available (and necessary) only when $D=\emptyset $.
%

\subsection{The relative Volodin space}\label{SecVolodin}

Now we introduce the relative Volodin space presheaf.
Its significance lies in its relation to the relative $K$-theory; see Theorem \ref{thm:Krel}.

\begin{definition}\label{DefVolodin}
Let $(X,D)\in \MSm $. Denote by $I=I_D\subset \mcal{O}_X$ the ideal sheaf defining $D$. 
Let $r\ge 0$ be a non-negative integer and $\sigma $ a partial order on the set $\{ 1,\dots ,r \} $.
Then the subgroup $T^\sigma (X,D) \subset \GL _r(X)$ is defined to be the set of matrices $(x_{ab})_{1\le a,b\le r}$ such that $x_{ab}\equiv \delta _{ab} \mod I_D$ (Kronecker's $\delta $) unless $a\overset{\sigma }{<} b$.
For example, if $\sigma $ is the usual total order on $\{ 1,2,3 \} $, then elements of $T^\sigma (X,D)$ look like:
\[  \begin{pmatrix} 1+I &\mcal{O}_X &\mcal{O}_X \\ I  & 1+I  & \mcal{O}_X  \\ I &I & 1+I \end{pmatrix} .  \]
If another order $\sigma '$ {\em extends} $\sigma $ (i.e.\ if $a\overset{\sigma }{<}b$ implies $a\overset{\sigma '}{<}b$), we have $T^\sigma (X,D)\subset T^{\sigma '}(X,D)$.
The {\em Volodin space} $\Volo _r(X,D)$ is the simplicial subset of $B\GL _r(X)$ defined by
\[ \Volo _r(X,D)=\bigcup _{\sigma }BT^\sigma (X,D) \subset B\GL _r(X).   \]
We set $\Volo (X,D)=\varinjlim _r \Volo _r(X,D)\subset B\GL (X)$. Define a Nisnevich sheaf $\Volo _r^\rel $ on $\MSm $ by $(X,D)\mapsto \Volo _r(X,D)$.
\end{definition}

We also denote by $\Volo _r^\rel $ the presheaf induced by the forgetful functor $\MSm ^*\to \MSm $. In particular, we have the site $\MSm ^*/\Volo _r^\rel $ fibered over it. 
The inclusion $\Volo _r^\rel \hookrightarrow B\GL _r$ induces a functor $\MSm ^*/\Volo _r^\rel \to \MSm ^*/B\GL _r$. Every presheaf and map of presheaves on the latter restrict to the former. It follows for example that the projective bundle formula in \S \ref{GlobalProjBund} holds on $\MSm ^*/\Volo _r^\rel $ in the same form.


\begin{lemma}\label{LemZarCov}
The map of simplicial presheaves $\Delta ^\circ \sqcup \Delta ^*\to \Delta $ on $\MSm ^*/\Volo _r^\rel $ is surjective in the Zariski topology.
\end{lemma}
\begin{proof}
It suffices to prove the following:
For each $(X,D)\in \MSm ^*$, $\alpha =(\alpha _1,\dots ,\alpha _n)\in B_n\GL _r (k(\alpha ))$ and $\theta \in \Delta ^n_m$, we have $X=(X\setminus D )\cup X^*_{\alpha ,\theta }$.

Let $\sigma $ be a (total) order on $\{ 1,\dots ,r \} $ such that $\alpha \in BT^\sigma (X,D)$. The matrices
$\seqdots{1}{n}{\alpha }$ are all upper triangular modulo $I_D$ up to permutation by $\sigma $.
It follows from the remark subsequent to Definition \ref{DefBGL*} that every $x\in D$ belongs to $X^*_{\alpha ,\theta }$. This proves the lemma.
\end{proof}

By Lemma \ref{LemZarCov}, the complex $\mcal{Z}$ is Zariski locally quasi-isomorphic to $\bbZ \otimes \Delta \simeq \bbZ $ by the map
$(\bbZ \otimes \Delta ^\circ) \oplus (\bbZ \otimes \Delta ^*) $
$\xrightarrow[\mrm{incl.}\sqcup (-\mrm{incl.})]{}$
$ \bbZ \otimes \Delta   $ when restricted to $\MSm ^*/\Volo _r^\rel $.


The rest of
this subsection is devoted to the proof of the following:
\begin{theorem}\label{ModulusVolodin}
The map \eqref{NonModXi^r} 
restricted to $\MSm ^*/\Volo _r^\rel $ factors through the subcomplex $\ps z^r_{\rel ,k(\GL _r)}$.
\end{theorem}
Note that the assertion only concerns the part
$\Gamma \supers{1}_{\MSm }\cupp \dots \cupp \Gamma \supers{r}_{\MSm }\colon \bbZ \otimes \Delta ^*\to \ps z^r $.
The following
criterion for the modulus condition will be useful.

\begin{definition}{\upshape (\cite[\S 4]{BS})}
Let $A$ be a commutative ring with unit and
$I$ be an ideal.
A polynomial
\begin{equation*}
f=\sum _{\seqdots{1}{n}{\lambda }} a_{\seqdots{1}{n}{\lambda }}t^{\lambda _1}\cdots t^{\lambda _n}
\in A[\seqdots{1}{n}{t}]
\end{equation*}
is said to be {\em admissible} 
if $a_{\seqdots{1}{n}{\lambda }}\in I^{\max _i \{ \lambda _i \}} $
and if $a_{0,\dots ,0}$ maps into $(A/I)^*$.

\end{definition}

\begin{lemma}{\upshape (\cite[Lemma 4.3]{BS})}
\label{BS-criterion}
Let $X$ be an affine scheme equipped with an effective Cartier divisor $D$.
Let $V$ be an integral closed subscheme of $X\times \square ^n$.
If the defining ideal for $V$ contains an admissible polynomial with respect to the defining ideal of $D$, then $V$ satisfies the modulus condition.
\end{lemma}

When $X$ is a $k$-scheme of finite type equipped with an ideal sheaf $I$,
let us say that an ideal sheaf ${J}$ on $X\times \square ^n$
is {\it admissible} if there exists an affine open covering $\{ U_\alpha  \} _\alpha $ of $X$ such that 
${J}$ restricted to each $U_\alpha \times \square ^n$ contains an admissible polynomial with respect to $I(U_\alpha )$.
Note that if $J$ is admissible and $f\colon X'\to X$ is a morphism from another scheme,
the ideal sheaf $(f\times \id _{\square ^n})^*J$ on $X'\times \square ^n$ is admissible with respect to $f^*I$,
because elements in a power $I^\lambda $ pull back into the power $(f^*I)^\lambda $.

\begin{notation}
Let $\{ x^i_{bc} \} ^{i\in \{ 1,\dots ,n \} } _{b,c\in \{ 1,\dots ,r  \} }$
be the coordinates for $B_n\GL _r=(\GL _r)^n$.
For an ordering $\sigma $ on $\{ 1,\dots ,r  \} $,
let $I^\sigma $ be the ideal of $\mcal{O}_{B_n\GL _r}$
generated by
$x^i_{bc}-\delta _{bc}$
with $i\in \{ 1,\dots ,n  \} $ and
$b,c\in \{ 1,\dots ,r \} $ such that $b\overset{\sigma }{ \not<  }  c$, where $\delta _{bc}$ is Kronecker's delta.

\end{notation}

Under this notation, a section $\alpha \in (\Volo _r^\rel )_n(X,D)$ is the same as a morphism of schemes
$X\to B_n\GL _r$ which maps the subscheme $D$ into the closed subscheme $V(I^\sigma )$ for some $\sigma $.
For subsets $S,T\subset [n]$, let us write $S\le T$ to mean $s\le t$ for all $s\in S$ and $ t\in T$.
Also, recall the symbol $F\supers{a}(S)$
from Definition \ref{F_iota}.

\begin{proof}[Proof of Theorem \ref{ModulusVolodin}]
The cycles defining the map 
$\Gamma \supers{1}_{\MSm }\cupp \dots \cupp \Gamma \supers{r}_{\MSm }$
are pull-backs of the universal cycles on $\bbP ^{r-1}\times B_n\GL _r^* \times \square ^n$
by individual maps $\bbP ^{r-1}\times X\times \square ^n\to \bbP ^{r-1}\times B_n\GL _r^*\times \square ^n$.
In view of Lemma \ref{BS-criterion} and this observation,
Theorem \ref{ModulusVolodin} follows from
the following lemma.

\begin{lemma}\label{Ideal-is-admissible}
Let $n\ge 0$ and $r\ge 1$ be integers and $\sigma $ an order on $\{ 1,\dots ,r \} $.
Let $S_1\le \dots \le S_r$ be non-empty subsets 
of $[n]$.
Then the ideal sheaf on ${\bbP ^{r-1}\times B_n\GL _r\times \square ^n}$
associated to the homogeneous ideal generated by: 
\begin{equation*}
F\supers{a}(S_a ) \quad 1\le a\le r
 \end{equation*}
is admissible with respect to the ideal sheaf $\mcal{O}_{\bbP ^{r-1}}\otimes _k I^\sigma $
on $\bbP ^{r-1}\times B_n\GL _r$.

\end{lemma}

Lemma \ref{Ideal-is-admissible} follows from a more precise claim below.
Note that we may obviously assume that $\sigma $ is a total order and,
by symmetry, that $\sigma $ is the usual order $\sigma =\{ 1<\dots <r  \} $.
Let us write $I:= I^\sigma $ for this $\sigma $.

For $S\subset \{ 1,\dots ,n  \} $, let us denote by
$[t_i\mid i\in S] \subset k[\seqdots{1}{n}{t}] $ 
the $2^{|S|}$-dimensional $k$-vector space spanned by
monomials $\prod _{i\in S}t_i^{\epsilon _i}$
where $\epsilon _i\in \{ 0,1 \} $.
To ease the notation, we shall use the phrase: 
\begin{quote}
\centering
``a polynomial of the form $T_c\cdot I\cdot [t_i\mid i\in S]$'' ($c\in \{ 1,\dots ,r \} $)
 \end{quote}
to mean a sum of polynomials of the form 
$T_c\cdot x \cdot f$ with $x \in I \subset k[x^i_{ab}]$ and $f\in [t_i\mid i\in S]$.
For a non-empty subset $S$ of $[n]$, we write $S'$ for the set $S\setminus \{ $the minimum element of $S \} $.

\begin{claim}\label{Claim-admissible}
For any $a\in \{ 1,\dots ,r\} $,
the ideal of the polynomial ring
\begin{equation*}
k[\seqdots{1}{r}{T}][x^i_{bc}\mid ^{ 1\le i\le n }_{  b,c\in \{ 1,\dots ,r  \}  }][\seqdots{1}{n}{t}] 
\end{equation*}
generated by
$\{ F\supers{b}(S_b) \} _{a\le b\le r}$ 
contains a polynomial of the form
\addtocounter{equation}{1} 
\newcounter{AdmissibleForm}\setcounter{AdmissibleForm}{\theequation} 
\begin{equation}
T_a+ \sum _{c=1}^r \left( T_c\cdot I\cdot \left[ t_i\mid i\in S_a' \cup S_{a+1}'\cup \dots \cup S_r'\right] \right) .
\tag{\theequation $_a$}
 \end{equation}
\end{claim}

Claim \ref{Claim-admissible} implies Lemma \ref{Ideal-is-admissible}
because formula (\theAdmissibleForm $_a$) divided by $T_a$ gives an admissible polynomial over the affine open set $\{ T_a \neq 0\} $.

{\em Proof of Claim}.
We proceed by descending induction on the index $a$ starting with $a=r$.
Let us write down the definition of $F\supers{a}(S)$, where $a\in \{ 1,\dots ,r \}$ and $S\subset [n]$ is a non-empty subset with $s+1$ elements:
\begin{align*}
F\supers{a}(S)
= &\sum _{i=1}^s\left( (S_*(v_i^{[s]})_* T_a )\cdot t_{S(i)}
\prod _{j=i+1}^s (1-t_{S(j)}) \right) \\
&+(S_*(v_0^{[s]})_* T_a )\cdot 
\prod _{j=1}^s (1-t_{S(j)}) .
\end{align*}
Since we are in the group $T(X,D)$ of upper triangular matrices modulo $I$,
for any map $v\colon [0]\to [n]$ (such as $S\circ v_i^{[s]}$)
the function
$v_*T_{ a }$
has the form
\begin{equation*}
\left( \sum _{b=1}^{a-1} T_b\cdot I \right) + T_a \cdot (1+I)
+ \left( \sum _{b=a+1}^r T_b \cdot \mcal{O} \right) ,
 \end{equation*}
where $\mcal{O}:= \mcal{O}_{B_n\GL _r}$.
By these two formulas and the identity
$t_{S(s)}+t_{S(s-1)}(1-t_{S(s)})+ \dots + (1-t_{S(1)})\cdots (1-t_{S(s)})=1$, 
we get:
\begin{equation*}\label{F-has}
F\supers{a}(S)=T_a+\sum _{b=1}^a \left( T_b\cdot I\cdot [t_i\mid i\in S'_a] \right)
+\sum _{b=a+1}^r \left( T_b\cdot \mcal{O}\cdot [t_i\mid i\in S'_a] \right) .
\end{equation*}
This already proves the assertion for $a=r$.

Now suppose $a< r $.
By descending induction, we know that the ideal in question contains polynomials of the form (\theAdmissibleForm $_b$)
for $b=a+1,\dots ,r$.
In particular, we get:
\begin{equation*}
\sum _{b=a+1}^r \left( T_b\cdot \mcal{O}\cdot [t_i\mid i\in S'_a] \right)
\equiv \sum _{c=1}^r \left( T_c\cdot I\cdot [t_i\mid i\in S'_a\cup S'_{a+1}\cup \dots \cup S'_r] \right)
 \end{equation*}
modulo the ideal in question.
Here we used the fact that the product of an element in
$[t_i\mid i\in S]$ and one in $[t_i\mid i\in T]$
with $S \cap T=\emptyset $ belongs to
$[t_i\mid i\in S\cup T ]$.
The last two formulas 
give a formula of the form (\theAdmissibleForm $_a$).
This completes the proof of Claim \ref{Claim-admissible}, hence also of Theorem \ref{ModulusVolodin}.
%
\end{proof}

Hence we have obtained a map
\begin{equation}\label{EqObtainedCupProduct}
\Gamma\super{1}_{\MSm }\cdot\ldots\cdot
\Gamma\super{r}_{\MSm }\colon
\mcal{Z}\to p_*z_{\rel ,k(\GL _r)} ^r 
 \end{equation}
in $D(\MSm ^* /\Volo _r^\rel )$.

\subsection{Specialization map, and end of construction of $\xi ^r_\rel $}\label{SecSpecialization}

Bloch defined a specialization map $z^i(X_{L},\bullet )\to z^i(X,\bullet )$ in the derived category when $L/k$ is a purely transcendental extension of finite degree equipped with a transcendence basis and $X$ is an equi-dimensional $k$-scheme \cite[pp.291, 292]{Bl86}.
Likewise, we can define a specialization map
\[
	\mrm{sp}_{L/k}\colon p_*z^i_{\rel ,L}\to p_*z^i_\rel 
\]
in $D(\MSm ^*/\Volo ^\rel _r)$ by using his map when $D=\emptyset $ and setting it to be the identity when $D\neq \emptyset $, roughly speaking.
See Appendix \ref{BlochSp} for a careful definition.

This applies in particular to the field $L=k(\GL _r)$.
Since the specialization map depends on the transcendental basis and the order thereof, we fix a total order on the set $\bb{N}\times \bb{N}$ once and for all, and use the induced order on the variables 
$\{ x_{ab} \} _{(a,b)\in \{ 1,\dots ,r \} ^2}$.

\begin{def-lem}\label{def:xi^r_rel}
We define the map $\xi ^r_\rel $ in $D(\MSm ^*/\Volo _r^\rel )$ by:
\begin{equation}\label{Xi^r!!}
\xi ^r_\rel \colon \bbZ \xleftarrow{\sim } \mcal{Z}\xrightarrow{\eqref{EqObtainedCupProduct}} p_*z^{r}_{\rel ,k(\GL _r)} \xrightarrow{\mathrm{sp}} p_*z^r_\rel   . 
\end{equation}
It does not depend on the choice of the ordering. 
\end{def-lem}
\begin{proof}
Suppose that two consecutive variables in a given order are interchanged. Via the corresponding automorphism on $k(\GL _r)$ and hence on $p_*z^r_{\rel ,k(\GL _r)}$, the problem is equivalent to the situation where
the specialization map stays the same but
the map $\mcal{Z}\to p_*z^r_{\rel ,k(\GL _r)}$ is constructed with the two variables interchanged.
But this difference is within homotopy by the homotopy at the end of \S \ref{general-procedure}.
\end{proof}

\begin{remark}\label{AltDef}
We will need to know that some cycles we have defined so far have certain alternative constructions when the base is restricted.
\begin{enumerate}[{\upshape (i)}]
\item \label{ItemT^o_a}
After the restriction of the base
$\Volo _r^\rel \times \Volo _s^\rel \hookrightarrow \Volo _{r+s}^\rel $,
the map $\xi ^{r+s}_\rel $ (on the level of presheaf map $  \mcal{Z}\to p_*z^{r+s}_{\rel ,k(\GL _{r+s})} $)
can be defined using the alternative $T_{a}^\circ $ as follows:
$T_{a}^\circ := 
\sum\limits _{b=1}^r T_{b} x_{ba} $ if $1\le a\le r$,
and $T_{a}^\circ :=
\sum\limits _{b=r}^{r+s} T_{b} x_{ba} $ if $r+1\le a\le r+s$.
This works because the proper intersection condition needed is now weaker (i.e., an analog of Lemma \ref{LemIntEmpty} is true with this $T^\circ _a $ on $\Volo _r^\rel \times \Volo _s^\rel $).
In this case, it is defined over the subfield $k(\GL _r\times \GL _s)$ of $k(\GL _{r+s})$ (the inclusion comes from the projection $M_{r+s}\to M_r\times M_s$ of the spaces of matrices).

\item 
In \S \ref{ConsMaps}, we defined maps (for $j\le r-1$)
\[ p^*(-)\cupp \xi ^j\colon  \quad z_\rel ^{i-j} \to  
p_*z^i_\rel \quad \text{ in }D(\MSm ^*/B\GL _r)  \]
using cycles given by the Friedlander-Lawson moving lemma.
On the smaller category $\MSm ^*/\Volo ^\rel_r$, it can be constructed in the style of this \S \ref{UnivChern}. 
Namely we use the maps
$\Gamma \super{a}_{\MSm }$ in formula \eqref{EqGamma} 
\[
C^j_{\MSm ^*/\Volo _r^\rel }:=\Gamma \super{1}_{\MSm }\cupp \dots \cupp \Gamma \super{j}_{\MSm }\colon \mcal{Z}\to p_*z^j_{k(\GL _r)} 
\]
(actually, any choice of $j$ members out of $\{ 1,\dots ,r \} $ will do, in place of $ 1,\dots ,j $),
and use the specialization map.
When $j=r$, this is the same as the construction of $\xi ^r_\rel $, so it is well-defined also for $j=r$.
Again, when we restrict the base from $\Volo _{r+s}^\rel $ to
$\Volo _r^\rel \times \Volo _s^\rel $ as in {\upshape \ref{ItemT^o_a}},
we can use the simpler choice of $T^\circ _a$ (which are built in $\Gamma \super{a}_{\MSm }$).

\end{enumerate}

\end{remark}

\subsection{Chern classes on the relative Volodin space}\label{ChernClassXrel}

Let $r>0$. 
In Theorem \ref{ProjBund}, we have proved an isomorphism
\[
	p^*(-)\cdot\xi^j \colon \bigoplus_{j=0}^{r-1} z^{r-j}_\rel \xrightarrow{\simeq} p_*z^r_\rel
\]
in $D(\MSm^*/B\GL_r)$, and thus in $D(\MSm^*/\mbf{X}_r^\rel)$.
In Definition-Lemma \ref{def:xi^r_rel}, we have constructed a map
\[
	\xi^r_\rel\colon \bb{Z} \to p_*z^r_\rel
\]
in $D(\MSm^*/\mbf{X}_r^\rel)$.
It follows that there are unique morphisms $c_i \colon \bb{Z} \to z^i_\rel$ in $D(\MSm^*/\mbf{X}_r^\rel)$ for $1\le i\le r$ which satisfy the equality of maps
$\bbZ \rightrightarrows p_*z^r_\rel $: 
\addtocounter{equation}{1} 
\newcounter{CharacterizingEq}\setcounter{CharacterizingEq}{\theequation} 
\begin{equation}
	\xi^r_\rel + (p^*c_1)\cdot \xi^{r-1} + \dotsb +p^*c_r = 0.
	\tag{\theequation $_r$}
 \end{equation}
It is convenient to define $c_i:= 0$ for $i>r$.
By Theorem \ref{lem:LocHtp} applied to $\mcal{C}= \MSm ^*$,
we have an isomorphism
\[
\Hom _{\Ho (s\PSh (\mcal{\MSm ^*}))}(\Volo _r^\rel ,K(z^i_\rel ,0))
\cong \Hom _{D (\MSm ^*/\Volo _r^\rel )}( \bbZ ,z^i_\rel ).
\]
Denote again by $c_i$ the corresponding map
$	c_i\colon \mbf{X}_r^\rel \to K(z^i_\rel,0) $
in the Nisnevich-local homotopy category of simplicial presheaves $\Ho(s\PSh (\MSm^*))$.

\begin{definition}\label{def:c_Vol}
The above-defined maps:
\begin{equation*}
c_i\colon \bbZ \to z^i_\rel 
\quad \text{\upshape or }\quad
c_i\colon \Volo _r^\rel \to z^i_\rel 
 \end{equation*}
are called the {\em Chern classes} (of rank $r$).
\end{definition}

Note that for every $r,i\ge 1$ the composite in $\Ho(s\PSh (\MSm ^*))$:
\[ *=(\text{the identity matrix})\hookrightarrow \Volo _r^\rel \xrightarrow{c_i} K(z^i_\rel ,0) \]
equals the constant map to the base point 
because the map $\xi ^r_\rel $ 
is represented by the empty cycle 
when restricted to $\MSm ^*/ \{ \id \} $.
By this fact and a somewhat standard result below,
it follows that
$c_i$ come from unique maps $\mbf{X}_r^\rel \to K(z^i_\rel,0)$ in $\Ho (s\PSh _*(\MSm ^*))$, the homotopy category of {\it pointed} simplicial presheaves.

\begin{lemma}{\upshape (cf.~\cite[Prop.5.2]{AS})}
\label{LemPointed}
Let $(X,x)$ be a pointed object in $s\PSh (\MSm ^*)$ and $(K,e_K)$ be a group object in the same category.
Then the square of sets below is cartesian:
\[ \xymatrix{
 \Hom _{\Ho (s\PSh _*(\MSm ^*))}((X,x),(K,e_K)) \ar[r] \ar[d]
 & {\mrm{pt}}
 \ar@{->}[d]
\\%
\Hom _{\Ho (s\PSh (\MSm ^*))}(X,K) 
 \ar[r]
&\Hom _{\Ho (s\PSh (\MSm ^*))}(x,K) ,
}   \]
where the left vertical arrow is the ``forget the base point'' map and the right vertical arrow maps the point to the constant map at $e_K$.
\end{lemma}


Next, associated with the embedding $\iota \colon \GL _r\hookrightarrow \GL _{r+1};~\alpha \mapsto \begin{pmatrix} \alpha &0 \\ 0& 1 \end{pmatrix}$, we have embeddings $\Volo _r^\rel  \hookrightarrow \Volo _{r+1}^\rel $
and $\bbP ^{r-1}\hookrightarrow \bbP ^{r}; (T_1:\dots :T_r)\mapsto (T_1:\dots :T_r:0)$.
The following is a special case of Proposition \ref{Prop:SecWhitney} below.

\begin{lemma}\label{CorStabMap}
The following diagram in $\Ho (s\PSh _*(\MSm ^*))$: 
\[ 
\xymatrix{
\Volo _r^\rel \ar[r]_\iota \ar@/_7mm/[rr]_{c_i} &
\Volo _{r+1}^\rel \ar[r]_(0.4){c_i} & K(z^i_{\rel },0).
}  \]
commutes for $i\ge 1$.
\end{lemma}

\subsection{Chern classes on the relative K-theory}\label{ChernClassKrel}

We let $\mrm{K}$ be a functorial model of Thomason-Trobaugh's $K$-theory \cite[3.1]{TT},
i.e., it is a presheaf of spectra $\mrm{K}$ 
such that for every quasi-compact quasi-separated scheme $X$, 
$\mrm{K}(X)$ is the $K$-theory spectrum of the Waldhausen category of perfect complexes on $X$.

\begin{definition}
\leavevmode
\begin{enumerate}[label=\upshape{(\roman*)}]
\item We define a presheaf $\mrm{K}^\rel$ of spectra on $\MSm$ by
\[
	\mrm{K}^\rel((X,D)) = \mrm{K}(X,D) = \hofib(\mrm{K}(X)\to \mrm{K}(D)).
\]
\item We define a presheaf $\bb{Z}^\rel$ on $\MSm$ by
\[
	\bb{Z}^\rel((X,D)) = \begin{cases} \bb{Z} &\text{if }D=\emptyset \\ 0 &\text{if }D\ne\emptyset. \end{cases}
\]
\end{enumerate}
\end{definition}

We use Bousfield-Kan's $\bb{Z}$-completion in \cite{BK72} as a functorial model of Quillen's plus construction.
The $\bb{Z}$-completion is an endofunctor $\bb{Z}_\infty\colon \mcal{S}\to \mcal{S}$ of the category of spaces (= simplicial sets) with a natural transformation $\mrm{Id}_{\mcal{S}}\to \bb{Z}_\infty$.
We will apply $\bb{Z}_\infty$ sectionwise to simplicial presheaves.

The following is a relative and functorial version of Quillen's ``$+=Q$'' theorem.
\begin{theorem}\label{thm:Krel}
There exists an isomorphism
\[
	\Omega^\infty \mrm{K}^\rel \simeq \bb{Z}^\rel \times \bb{Z}_\infty \mbf{X}^\rel
\]
in $\Ho(s\PSh_*(\MSm))$.\footnote{In fact, the isomorphism exists in the Zariski-local homotopy category of simplicial presheaves over any reasonable category of pairs of schemes.}
Under this isomorphism, the multiplication of $\Omega^\infty \mrm{K}^\rel$ coming from loop composition
is compatible with the one of $\bb{Z}^\rel \times \bb{Z}_\infty \mbf{X}^\rel$ coming from the group law of $\bb{Z}$ and the diagonal sum of matrices.
\end{theorem}
\begin{proof}
As in \cite[11.3.6]{Loday}, for any ring $A$ with an ideal $I$, there exists an isomorphism
\[
	\Omega^\infty \mrm{K}(A,I) \simeq \mrm{K}_0(A,I)\times \bb{Z}_\infty\mbf{X}(A,I) .
\]
The construction of the isomorphism can be functorial for the connected components (cf.\ \cite[Proposition 2.15]{Gi81} for the case $I=0$), and thus the desired isomorphism follows.
We can verify easily that each step of the construction of the isomorphism is compatible with the multiplications.
\end{proof}

\begin{theorem}\label{thm:stability}
For $r\ge 2l+2$, the canonical map
\[
	\bb{Z}_\infty\mbf{X}^\rel_r \to \bb{Z}_\infty\mbf{X}^\rel
\]
is a Zariski-local $l$-equivalence of simplicial presheaves on $\MSm$,
i.e., a Zariski-local weak equivalence after taking the $l$-th Postnikov filtration.
\end{theorem}
\begin{proof}
By Suslin's stability as formulated in \cite[\S5]{Be}, for any local ring $A$ with an ideal $I$, the canonical map $\mbf{X}_r(A,I) \to \mbf{X}(A,I)$ induces homology isomorphisms in degree less or equal to $(r-1)/2$.
Then it follows from \cite[Ch I 6.2]{BK72} that the morphism
\[
	\pi_l\bb{Z}_\infty\mbf{X}_r(A,I) \to \pi_l\bb{Z}_\infty\mbf{X}(A,I)
\]
is an isomorphism for $l\le (r-2)/2$.
This proves the theorem.
\end{proof}

In Definition \ref{def:c_Vol}, we have constructed maps
\[
	c_i\colon \mbf{X}_r^\rel \to K(z^i_\rel,0)
\]
for $1\le i\le r$ in $\Ho(s\PSh_*(\MSm^*))$.
Let $l>0$.
For $r\gg l$, we have the following sequence of morphisms in $\Ho(s\PSh_*(\MSm^*))$:
\[
\xymatrix@R-0.5pc{
	\Omega^\infty \mrm{K}^\rel \ar@{=}[r]^-\sim_-{\ref{thm:Krel}} \ar@/^3pc/@{.>}[rrd]^(0.8){\tau_{\le l}\msf{C}_i}
		& \bb{Z}^\rel \times \bb{Z}_\infty \mbf{X}^\rel \ar[d]^{\text{projection}} & \\
		& \bb{Z}_\infty \mbf{X}^\rel \ar[d]^{\text{canonical}} & K(\tau_{\le l}z^i_\rel,0) \\
		& P_l \bb{Z}_\infty \mbf{X}^\rel & P_l K(z^i_\rel,0) \ar[d]_\simeq \ar@{=}[u]^\wr \\
		& P_l \bb{Z}_\infty \mbf{X}_r^\rel \ar[u]_\simeq^{\ref{thm:stability}} \ar[r]^-{P_l c_i}  & P_l\bb{Z}_\infty K(z^i_\rel,0) 
} 
\]
where $P_l$ is the $l$-th Postnikov filtration and $\tau_{\le l}$ is the $l$-th canonical filtration.
According to Lemma \ref{CorStabMap}, the composite $\tau_{\le l}\msf{C}_i$ is independent of the choice of $r$.
Also, the diagram
\[
\xymatrix@C+1pc{
	\Omega^\infty \mrm{K}^\rel \ar[r]^-{\tau_{\le l+1}\msf{C}_i} \ar[rd]_-{\tau_{\le l}\msf{C}_i} & K(\tau_{\le l+1}z^i_\rel,0) \ar[d] \\
	& K(\tau_{\le l}z^i_\rel,0)
}
\]
commutes, where the vertical map is the obvious one.

\begin{theorem}\label{thm:ChernModulus}
Let $i>0$.
There exists a morphism
\[
	\msf{C}_i\colon \Omega^\infty \mrm{K}^\rel \to ``\lim_l"K(\tau_{\le l}z^i_\rel,0)
\]
in $\pro\Ho(s\PSh_*(\MSm^*))$.
For $n\ge 0$, its $(-n)$-th hypercohomology on a modulus pair $(X,D)$ yields a map
\[
	\msf{C}_{n,i}\colon \mrm{K}_n(X,D) \to H^{-n}_\Nis((X,D),z^i_\rel),
\]
which is functorial in $(X,D)\in\MSm$ and is a group homomorphism for $n>0$.
This map coincides with Bloch's Chern class \cite[\S7]{Bl86} when $D=\emptyset$.
\end{theorem}
\begin{proof}
We define $\msf{C}_i = ``\lim_l"\tau_{\le l}\msf{C}_i$.
Since the Nisnevich cohomological dimension of $X$ is finite, by taking the $(-n)$-th hypercohomology of $\msf{C}_i$, we obtain
\begin{multline*}
	\msf{C}_{n,i} \colon \mrm{K}_n(X,D) \xrightarrow{\simeq} H^{-n}_\Nis((X,D),\mrm{K}^\rel) \\ 
		\to H^{-n}_\Nis((X,D),\tau_{\le l}z^i_\rel) \simeq H^{-n}_\Nis((X,D),z^i_\rel).
\end{multline*}
The first map is an isomorphism by Thomason-Trobaugh's Nisnivich descent.
Recall that $\MSm^*$ could be the category over any finite diagram in $\MSm$, which ensures the functoriality.
The map $\msf{C}_{n,i}$ is a group homomorphism for $n>0$ since it is defined by taking the $n$-th homotopy groups.
Compatibility with Bloch's Chern class is immediate from the construction.
\end{proof}

\section{Whitney sum formula}\label{Sec:Whitney}

We show the Whitney sum formula for $\msf{C}_{0,*}$
by doing some more cycle computation.
It involves the operation called {\em algebraic join}.

\subsection{Algebraic join}\label{SecJoin}
Let $X$ be a scheme.
Consider the projective spaces over $X$:
$\bbP ^{r-1}_X=\mbf{Proj} \bigl( \mcal{O}_X[T_{1},\dots ,T_{r}] \bigr) $,
$\bbP ^{s-1}_X=\mbf{Proj} \bigl( \mcal{O}_X[T_{r+1},\dots ,T_{r+s}] \bigr) $
and
\begin{equation}\label{CoordConv}  
\bbP ^{r+s-1}_X=\mbf{Proj} \bigl( \mcal{O}_X[T_{1},\dots ,T_{r+s}] \bigr) .\end{equation}
The schemes $\bbP ^{r-1}_X$ and $\bbP ^{s-1}_X$ are naturally closed subschemes of $\bbP ^{r+s-1}_X$.
We consider the rational maps
$q_1\colon \bbP ^{r+s-1}_X\dashrightarrow \bbP ^{r-1}_X$ and 
$q_2\colon \bbP ^{r+s-1}_X \dashrightarrow \bbP ^{s-1}_X$
defined by $(T_{1},\dots ,T_{r+s})\mapsto (T_{1},\dots ,T_{r})$ and $\mapsto (T_{r+1},\dots ,T_{r+s})$.

Denote by $\pi _1\colon P_1\to \bbP ^{r+s-1}_X$ the blow-up along the ill-defined locus $\bbP ^{s-1}_X$ of $q_1$. Then $q_1$ induces a morphism $q_1'\colon P_1\to \bbP ^{r-1}_X$ which is a $\bbP ^s$-bundle.
Denote by $q_1^*$ the operation on cycles defined as flat pull-back $q_1^{\prime *}$ followed by proper push-forward $\pi _{1*}$.
Similarly, if $\pi _2\colon P_2\to \bbP ^{s-1}_X$ is the blow-up along $\bbP ^{r-1}_X$, the rational map $q_2$ induces a morphism $q_2'\colon P_2\to \bbP ^{s-1}_X$ which is a $\bbP ^r$-bundle.
Denote by $q_2^*$ the flat pull-back $q_2^{\prime *}$ followed by push-forward $\pi _{2*}$.

Observe the obvious fact that
the cycle in $\bbP ^{r-1}_X$ given by a set of homogeneous equations $\{ f_\alpha (T_{1},\dots ,T_{r}) \} _\alpha $ is mapped by $q_1^*$ to the cycle
in $\bbP ^{r+s-1}_X$ defined by the same equations.

\begin{lemma}\label{JoinProduct}
Let $\alpha $ be an element in $z^i(\bbP ^{r-1}_X,m)$ and $\beta $ be an element in $z^j(\bbP ^{r-1}_X,n)$. Suppose that the intersection product $\alpha \cupp \beta \in z^{i+j}(\bbP ^{r-1}_X,m+n)$ is defined. Then the same holds for cycles $q_1^*\alpha \in z^i(\bbP ^{r+s-1}_X,m)$ and $q_1^*\beta \in z^j(\bbP ^{r+s-1}_X,n)$ and we have an equality in $z^{i+j}(\bbP ^{r+s-1}_X,m+n)$:
\[  q_1^*(\alpha \cupp \beta )=(q_1^*\alpha )\cupp (q_1^*\beta ).    \]
The same is true for the operation $q_2^*$.
\end{lemma}
\begin{proof}
Preservation of intersection product certainly holds for flat pull-back.
It holds for proper push-forward by birational maps $\pi $ when the two cycles $\alpha ',\beta '$ under consideration satisfy:
\begin{itemize}
\item The intersection product $(\pi _*\alpha ')\cupp (\pi _*\beta ')$ is again defined, and 
\item no component of $\alpha ',\beta ', \alpha '\cupp \beta '$ or $(\pi _*\alpha ') \cupp (\pi _*\beta ')$ is contained in the exceptional locus of $\pi $.
\end{itemize}
This condition is satisfied in our case.
\end{proof}

\begin{definition}
Let $\alpha \in z^i(\bbP ^{r-1}_X,m)$ and $\beta \in z^j(\bbP ^{s-1}_X,n)$ be cycles. Consider the cycles $q_1^*\alpha \in z^i(\bbP ^{r+s-1}_X,m)$ and $q_2^*\beta \in z^j(\bbP ^{r+s-1}_X,n)$. When the intersection $(q_1^*\alpha )\cupp (q_2^*\beta )\in z^{i+j}(\bbP ^{r+s-1}_X,m+n)$ is well-defined, we denote it by $\alpha \# \beta $.
\end{definition}
The operation $(\alpha ,\beta )\mapsto \alpha \# \beta $ is called {\em algebraic join}. It has been systematically used by authors like Friedlander, Lawson, Michelsohn and Walker. The authors of the present article learned this technique mainly in \cite{FL98}.

\subsection{The equalities}

Let $r,s$ be non-negative integers.
We keep the coordinate convention \eqref{CoordConv}
when considering projective bundles $\bbP (E\GL _r)$ and $\bbP (E\GL _s)$.
On the category $\MSm ^*/\Volo _r^\rel \times \Volo _s^\rel $ we consider presheaves:
\begin{itemize}
\item
$p_{1*}z^i_\rel $, which is induced from $p_*z^i_\rel $ on $\MSm ^*/\Volo _r^\rel $ by the first projection $\Volo _r^\rel \times \Volo _s^\rel \to \Volo _r^\rel $;

\item
$p_{2*}z^i_\rel $, induced by the second projection
$\Volo _r^\rel \times \Volo _s^\rel \to \Volo _s^\rel $;

\item
$p_*z^i_\rel $, induced by the inclusion $\Volo _r^\rel \times \Volo _s^\rel \hookrightarrow \Volo _{r+s}^\rel $;

\item
their non-modulus counterparts $p_{1*}z^i$, $p_{2*}z^i$ and $p_*z^i$
\end{itemize}
Let us distinguish the pull-back maps by
writing $p^{*}_1\colon z^i_\rel \to p_{1*}z^i_\rel $, 
$p_2^{*}\colon z^i_\rel \to p_{2*}z^i_\rel $
and
$p^{*}\colon z^i_\rel \to p_*z^i_\rel $.
Similarly, we can consider three different versions of $\mcal{Z}$'s, denoted by $\mcal{Z}_1$, $\mcal{Z}_2$ and $\mcal{Z}$. There are obvious maps $\mcal{Z}\to \mcal{Z}_1$ and $\mcal{Z}\to \mcal{Z}_2$.

We may consider the partially defined join operator
$\# \colon p_{1*}z^i_\rel 
\otimes p_{2*}z^j_\rel 
\dashrightarrow 
p_*z^{i+j} _\rel $
and its non-modulus version.
The modulus version is a well-defined map in $D(\MSm ^*/\Volo ^\rel _r\times \Volo ^\rel _s)$ (\S \ref{Subsec:JoinWellDef}).
Given two maps $\alpha \colon \mcal{Z}\to p_{1*}z^i$ and $\beta \colon \mcal{Z}\to p_{2*}z^j$,
we consider their cup product followed by algebraic join to get a map
$\alpha \# \beta \colon \mcal{Z}\to p_* z^{i+j} $ 
whenever it is well-defined.

In Remark \ref{AltDef} (ii), we defined maps 
$C^i_{\MSm ^*/\Volo _r^\rel }$
of presheaves on $\MSm ^*/\Volo _r^\rel $.
Via $\mcal{Z}\to \mcal{Z}_1$,
it induces a map 
\begin{equation*}
C^i_{\MSm ^*/\Volo _r^\rel }\colon \mcal{Z}\to p_{1*}z^i_{k(\GL _r)} 
\quad \text{ on }\MSm ^*/\Volo _r^\rel \times \Volo _s^\rel 
 \end{equation*}
which we denote by the same symbol.
Similarly for $C^j_{\MSm ^*/\Volo _s^\rel }$. 
Recall that they depend on the choice of $i$ indices out of $\{ 1,\dots ,r \} $ and $j$ out of $\{ r+1,\dots ,r+s \}$ although it is not explicit in the notation.


Also, thanks to Remark \ref{AltDef} (i)(ii),
we have yet other maps 
\begin{equation*}
C^{k}_{\MSm ^*/\Volo _r^\rel \times \Volo _s^\rel }\colon \mcal{Z}\to p_*z^k_{\rel ,k(\GL _r\times \GL _s)} 
 \end{equation*}
for integers $1\le k \le r+s $.

\begin{proposition}\label{WhitneyCycle1}
For any $0\le i\le r$ and $0\le j\le s$, the two maps
\[ C^i_{\MSm ^*/\Volo _r^\rel }\# 
C^j_{\MSm ^*/\Volo _s^\rel } \text{\upshape \ and } 
C^{i+j}_{\MSm ^*/\Volo _{r+s}^\rel }
\colon \quad
\mcal{Z}\rightrightarrows p_*z^{i+j}_{k(\GL _r\times \GL _s)}  \]
are equal as maps of presheaves on $\MSm ^*/\Volo _r^\rel \times \Volo _s^\rel $
under appropriate choices of indices (specified in the proof). 
In particular we have $\xi ^r_\rel \# \xi ^s_\rel =\xi ^{r+s}_\rel $ 
as maps $\bbZ \rightrightarrows p_*z^{r+s}_\rel $
in $D(\MSm ^*/\Volo _r^\rel \times \Volo _s^\rel )$.
\end{proposition}
\begin{proof}
It suffices to prove the corresponding equality of cycles on the simplicial scheme $\bbP (E\GL _{r+s})_{|B\GL _r\times B\GL _s}$.
By the definition of algebraic join of maps,
the problem is to prove the equality of maps:
\begin{multline*} 
q_1^*(\Gamma \super{1}_{B\GL _r}\cupp \dots \cupp \Gamma \super{i}_{B\GL _r})\ \cupp \ q_2^*(\Gamma \super{r+1}_{B\GL _s}\cupp \dots \cupp \Gamma \super{r+j}_{B\GL _s})
\\%
=\Gamma \super{1}_{B\GL _{r+s}}\cupp \dots \cupp
\Gamma \super{i}_{B\GL _{r+s}}\cupp 
\Gamma \super{r+1}_{B\GL _{r+s}}
\cupp \Gamma \super{r+j}_{B\GL _{r+s}}   
\end{multline*}
from the cone of
\begin{equation*}
\bbZ [B_\bullet \GL _r^*\times B_\bullet \GL _s^*]\to \bbZ [B_\bullet (\GL _{r}\times \GL _s )]\oplus  \bbZ [B_\bullet \GL _r^*\times B_\bullet \GL _s^*]
 \end{equation*}
to 
$ z^{r+s}( \bbP ^{r+s-1}_{k(\GL _r\times \GL _s)} \times - ,\bullet )$.
By Lemma \ref{JoinProduct}, it is reduced to the equalities of cycles
$q_1^*\Gamma \supers{a}_{B\GL _r}(S) =\Gamma \supers{a}_{B\GL _{r+s}}(S)
$
on
$\bbP (E_n\GL _{r+s})_{|B\GL _r^*\times B\GL _s^*}
$
for all $1\le a\le r$ and non-empty subsets 
$S\subset [n]$, and its variants involving $\circ $, $\circ *$ and $q_2^*$.
In view of the fact observed before Lemma \ref{JoinProduct}, this last equality clearly holds. This completes the proof of the proposition.
\end{proof}

Now, consider the following diagram in $D(\MSm ^*/\Volo _{r}^\rel \times \Volo _s^{\rel } )$ (see \S\S \ref{Sec:FurtherExample}, \ref{Subsec:JoinWellDef} for the tensor products $\Dtimes $):
\begin{equation}\label{CommutesInVolo1}
\vcenter{ \xymatrix{
\bigl( \bbZ \oplus \displaystyle\bigoplus _{i=1}^r z^i_{\rel } \bigr)\Dtimes
\bigl( \bbZ \oplus  \displaystyle\bigoplus _{j=1}^s z^j_{\rel } \bigr)
\ar[d]_{\text{intersection}}
\ar[rrr]^(0.6){\sigma (r)\otimes \sigma (s)}
&{}&{}&
p_{1*}z^{r}_\rel \Dtimes p_{2*}z^s_\rel 
\ar[d]^{\# }
\\%
 \bbZ \oplus \displaystyle\bigoplus _{k=1}^{r+s}z^k_\rel 
\ar[rrr]_{\sigma (r+s)}
&{}&{}&
p_*z^{r+s}_{\rel }
} }
\end{equation}
where the vertical map 
``intersection'' sends an element $(\alpha _0,(\alpha _i)_i)\otimes (\beta _0,(\beta _j)_j)$ to the tuple of cycles
$\left( \sum\limits _{k=i+j} \alpha _i\cupp \beta _j \right) _{0\le k\le r+s} $.  
The horizontal maps $\sigma $ are defined by
$\sigma (r)\colon (\alpha _0,(\alpha _i)_{i=1}^r)\mapsto  \alpha _0\xi ^r_\rel +\sum\limits _{i=1}^rp^*(\alpha _i)\cupp \xi ^{r-i}. $ 
Applying Lemma \ref{JoinProduct}, Proposition \ref{WhitneyCycle1} 
and the commutativity of intersection product in the derived category,
one checks that 
the diagram \eqref{CommutesInVolo1} commutes.


The rank $r$ Chern classes $c_i$ are characterized by the property that the composite map
$\bbZ \xrightarrow{(1,c_1,\dots ,c_r)}
\bbZ \oplus \bigoplus _{i=1}^rz^i_\rel 
\xrightarrow{\sigma } p_*z^r_\rel $
is zero. 
Of course the same holds for the rank $s$ case.
It follows by the commutativity of \eqref{CommutesInVolo1} that the composite
$\bbZ \xrightarrow{\left( \sum _{i+j=k}c_i\cupp c_j\right) _{k\ge 0}}\bbZ \oplus \bigoplus _{k=1}^{r+s}z^k_\rel \xrightarrow{\sigma } p_*z^{r+s}_\rel  
$
is zero. From the characterization of Chern classes we get:

\begin{proposition}\label{Prop:SecWhitney}
We have an equality $c_k=\sum _{i+j=k}c_i\cupp c_j  
$
(where $c_0:=1$)
of maps $\bbZ \rightrightarrows z^k_\rel $ in $D(\MSm ^*/\Volo _r^\rel \times \Volo _s^\rel )$ for $1\le k\le r+s$.
\end{proposition}

Equivalently, it is an equality of maps
$\Volo _r^\rel \times \Volo _s^\rel \rightrightarrows K(z^k_\rel ,0) $ in the homotopy category of pointed simplicial presheaves $\Ho (s\PSh _* (\MSm ^*))$.

\begin{corollary}\label{cor:SecWhitney}
The diagram in $\Ho(s\PSh_*(\MSm^*))$
\[
\xymatrix@C+6pc{
	\mbf{X}_r^\rel \times \mbf{X}_s^\rel \ar[r]^-{(1,c_1,\dotsc,c_r)\times(1,c_1,\dotsc,c_s)} \ar[d]
		& K\Bigl(\bbZ \oplus \displaystyle\bigoplus_{i=1}^r z^i_{\rel},0\Bigr) \times K\Bigl(\bbZ \oplus \displaystyle\bigoplus_{i=1}^s z^i_{\rel},0\Bigr) \ar[d] \\
	\mbf{X}_{r+s}^\rel \ar[r]^-{(1, c_1,\dotsc,c_{r+s})} & K\Bigl(\bbZ \oplus \displaystyle\bigoplus _{i=1}^{r+s} z^i_{\rel }, 0\Bigr)
}
\] 
is commutative, where the left vertical map is defined by the diagonal sum of matrices and the right one is defined by the intersection product.
\end{corollary}

\subsection{Whitney sum formula on the relative $K$-theory}\label{SubsecWhitney}

We set $\tilde{z}^*_\rel := \bb{Z} \oplus (\bigoplus_{i\ge 1} z^i_\rel)$.
We define the \textit{total Chern class map}
\begin{equation}\label{totC}
	\msf{C}_{\msf{tot}}\colon \Omega^\infty \mrm{K}^\rel\cong \bb{Z}^\rel \times\bb{Z}_\infty \mbf{X}^\rel
		\to \bb{Z}\times ``\lim_l" K(\tau_{\le l}\tilde{z}^*_\rel, 0)
\end{equation}
by the product of the canonical map $\bb{Z}^\rel\to \bb{Z}$ and $(1,\msf{C}_1,\msf{C}_2,\dotsc)$.
We have seen that the diagonal sum of $\mbf{X}^\rel$ and the group law of $\bb{Z}^\rel$ is compatible with the loop composition of $\Omega^\infty \mrm{K}^\rel$ (Theorem \ref{thm:Krel}).
It follows from Corollary \ref{cor:SecWhitney} that the diagram in $\pro\Ho(s\PSh_*(\MSm^*))$
\[
\xymatrix@C+1.5pc{
	\Omega^\infty \mrm{K}^\rel\times\Omega^\infty \mrm{K}^\rel \ar[r]^-{\msf{C}_{\msf{tot}}\times\msf{C}_{\msf{tot}}} \ar[d]
		& \bb{Z}\times\bb{Z}\times ``\displaystyle\lim_l"K(\tau_{\le l}\tilde{z}^*_\rel \otimes \tau_{\le l}\tilde{z}^*_\rel,0)
			\ar[d]^{\text{sum}\times\text{prod}} \\
	\Omega^\infty \mrm{K}^\rel \ar[r]^-{\msf{C}_{\msf{tot}}} & \bb{Z}\times ``\displaystyle\lim_l" K(\tau_{\le l}\tilde{z}^*_\rel, 0)
}
\]
is commutative.
By taking the $0$-th hypercohomology of $\msf{C}_{\msf{tot}}$ on a modulus pair $(X,D)$, we obtain a map
\begin{equation}\label{total0}
	\mrm{K}_0(X,D) \to \bb{Z}\times\{1\}\times \bigoplus_{i\ge 1}H^0_\Nis((X,D),z^i_\rel).
\end{equation}
We regard the target as a group by
\[
	\Bigl(n, 1+\sum_{i\ge 1} \alpha_i\Bigr) \cdot \Bigl(m, 1+\sum_{j\ge 1} \beta_j\Bigr) = \Bigl( n+m, (1+\sum_{i\ge 1}\alpha_i) (1+\sum_{j\ge 1}\beta_j) \Bigr).
\]
It follows from the above commutative diagram that:
\begin{theorem}\label{thm:WhitneyK}
The map (\ref{total0}) is a group homomorphism.
In other words, we have
\[
	\msf{C}_{0,i}(\alpha+\beta) = \sum_{j+k=i,\,  j,k\ge 0} \msf{C}_{0,j}(\alpha)\msf{C}_{0,k}(\beta)
\]
for $\alpha,\beta\in \mrm{K}_0(X,D)$ with the convention $\msf{C}_{0,0}=1$.
\end{theorem}

\section{Chern character and application}\label{Application}

\subsection{Chern character}\label{Chernch}

We set
\[
	A^0=\Hom(\Omega^\infty \mrm{K}^\rel,\bb{Z}) \quad \text{and} \quad A^i=\Hom(\Omega^\infty \mrm{K}^\rel,``\lim_l"K(\tau_{\le l}z^i_\rel,0)),
\]
where the Hom group is taken in the category $\pro\Ho(s\PSh_*(\MSm^*))$.
Under this convention, the total Chern class (\ref{totC}) is in the set $A^0\times\{1\}\times\prod_{i\ge 1}A^i$.
We define a map
\[
	\mrm{ch} \colon A^0\times \{1\} \times \bigoplus_{i\ge 1}A^i \to A^*_{\bb{Q}}:=\prod_{i\ge 0}A^i\otimes\bb{Q}
\]
as in \cite[Expos\'e 0, Appendix 1.26]{SGA6}, i.e., 
\[
	\mrm{ch}\Bigl(\Bigl(n,1+\sum_{i\ge 1}x^i\Bigr)\Bigr) = n +\eta\Bigl(\log\Bigl(1+\sum_{i\ge 1}x^i\Bigr)\Bigr),
\]
where $\eta$ is an endomorphism of $A^*_{\bb{Q}}$ defined by $\eta(x^i) = (-1)^{i-1}x^i/(i-1)!$.

The image of $\msf{C}_{\msf{tot}}$ by $\mrm{ch}$ gives a map
\begin{equation}\label{ch}
	\msf{ch}\colon \Omega^\infty \mrm{K}^\rel \to ``\lim_l"K(\tau_{\le l}(\tilde{z}^*_\rel)_{\bb{Q}},0)
\end{equation}
in $\pro\msf{Ho}(s\PSh_*(\MSm^*))$.
According to Theorem \ref{thm:ChernModulus} and Theorem \ref{thm:WhitneyK}, we obtain the following result.
\begin{theorem}\label{thm:ch}
Let $(X,D)\in\MSm$ and $n\ge 0$.
The $(-n)$-th hypercohomology of (\ref{ch}) yields a group homomorphism
\[
	\msf{ch}_n\colon \mrm{K}_n(X,D) \to H^{-n}_{\Nis}((X,D),(\tilde{z}^*_\rel)_{\bb{Q}}),
\]
which is functorial in $(X,D)$ and coincides with Bloch's Chern character when $D=\emptyset$.
\end{theorem}

For an additive category $\mcal{A}$, let $\mcal{A}_{\bb{Q}}$ be the \textit{category up to isogeny},
which has the same objects as $\mcal{A}$ and $\Hom_{\mcal{A}_{\bb{Q}}}(M,N)= \Hom_{\mcal{A}}(M,N)\otimes \bb{Q}$.
We denote the image of $M\in\mcal{A}$ in $\mcal{A}_{\bb{Q}}$ by $M_{\bb{Q}}$.

For a presheaf $F$ on $\MSm$, we define a pro system of presheaves $\hat{F}$ by
\[
	\hat{F}((X,D))= \{F(X,mD)\}_{m\ge 1}.
\]
The above argument can be modified to obtain a map
\begin{equation}\label{chhat}
	\hat{\msf{ch}}\colon (\Omega^\infty\hat{\mrm{K}}^\rel)_{\bb{Q}} \to ``\lim_l"K(\tau_{\le l}\hat{\tilde{z}}^*_\rel,0)_{\bb{Q}}
\end{equation}
in $\pro\msf{Ho}(\pro s\PSh_*(\MSm^*))_{\bb{Q}}$.
Here is a variant of Theorem \ref{thm:ch}.
\begin{theorem}\label{thm:chhat}
Let $(X,D)\in\MSm$ and $n\ge 0$.
The $(-n)$-th hypercohomology of (\ref{chhat}) yields a morphism
\[
	\msf{ch}_n\colon \{\mrm{K}_n(X,mD)\}_{m,\bb{Q}} \to \{H^{-n}_{\Nis}((X,mD),\tilde{z}^*_\rel)\}_{m,\bb{Q}}
\]
in the category of pro abelian groups $(\pro\msf{Ab})_{\bb{Q}}$ up to isogeny.
This is functorial in $(X,D)$ and coincides with Bloch's Chern character when $D=\emptyset$.
\end{theorem}

\subsection{Relative motivic cohomology of henselian dvr}

Let $k$ be a field of characteristic zero.
Let $A$ be a henselian dvr over $k$ and $\pi$ its uniformizer.
Set $X=\Spec A$ and $D=\Spec A/\pi$.
In this section, we prove the following.
\begin{theorem}\label{th:henseliandvr}
For every $n\ge 0$, there is a natural isomorphism
\begin{multline*}
	\{\CH^*(X|mD,n)\}_{m,\bb{Q}} \\
		\simeq \{\mrm{K}_n(X,mD)\oplus \ker(\CH^*(X|mD,n) \to \CH^*(X,n)) \}_{m,\bb{Q}}
\end{multline*}
in the category $(\pro\msf{Ab})_{\bb{Q}}$ of pro abelian groups up to isogeny.
\end{theorem}

We expect that $\{\ker(\CH^i(X|mD,n) \to \CH^i(X,n))\}_{m,\bb{Q}}$ vanishes.

\begin{lemma}\label{lem:keyK}
The canonical map
\[
	\mrm{K}_n(A) \to \{\mrm{K}_n(A/\pi^m)\}_m
\]
is a pro epimorphism.
\end{lemma}
\begin{proof}
By Artin's approximation theorem \cite{Ar69}, we may replace $A$ by its completion $\hat{A}\simeq F[[t]]$, i.e., enough to show that
\[
	\mrm{K}_n(F[[t]]) \to \{\mrm{K}_n(F[t]/t^m)\}_m
\]
is a pro epimorphism.

Since $\mrm{K}_n(F[[t]]) \to \mrm{K}_n(F)$ is a split surjection, it suffices to show that 
\[
	\mrm{K}_n(F[[t]],(t)) \to \{\mrm{K}_n(F[t]/t^m,(t))\}_m
\]
is a pro epimorphism.
By Goodwillie's theorem \cite{Go86} and the $\HC$ version of pro HKR-theorem \cite[Theorem 3.23]{Mo15}, we have pro isomorphisms
\[
\begin{split}
	\{\mrm{K}_n(F[t]/t^m,(t))\}_m &\simeq \{\HC_{n-1}(F[t]/t^m,(t))\}_m \\
								  &\simeq \Bigl\{\bigoplus_{p=0}^{n-1} H^{2p-(n-1)}(\Omega_{F[t]/t^m,(t)}^{\le p})\Bigr\}_m,
\end{split}
\]
where $\Omega^j_{A,I}:= \ker(\Omega^j_A \to \Omega^j_{A/I})$.
By the Poincar\'e lemma \cite[Corollary 9.9.3]{Wei94}, we have
\[
	H^j(\Omega_{F[t]/t^m,(t)}^\bullet) = 0
\]
for every $m,j\ge 0$.
Hence, it follows that
\[
	\{\mrm{K}_n(F[t]/t^m,(t))\}_m \simeq \{\Omega^{n-1}_{F[t]/t^m,(t)}/d\Omega^{n-2}_{F[t]/t^m,(t)}\}_m.
\]

Consider the commutative diagram
\[
\xymatrix{
	0 \ar[r] & H^{n-1}(\Omega^\bullet_{F[t]/t^m}) \ar[r] \ar[d]^\simeq
		& \Omega^{n-1}_{F[t]/t^m}/d\Omega^{n-2}_{F[t]/t^m} \ar[r] \ar@{->>}[d] 	& d\Omega^{n-1}_{F[t]/t^m} \ar[r] \ar@{->>}[d] & 0 \\
	0 \ar[r] & H^{n-1}(\Omega^\bullet_F) \ar[r] & \Omega^{n-1}_F/d\Omega^{n-2}_F \ar[r] & d\Omega^{n-1}_F \ar[r] & 0,
}
\]
where the rows are exact and the vertical maps are split surjections.
Again by the Poincar\'e lemma, the left vertical map is an isomorphism, and thus we have an isomorphism
\[
	\Omega^{n-1}_{F[t]/t^m,(t)}/d\Omega^{n-2}_{F[t]/t^m,(t)} \simeq d\Omega^{n-1}_{F[t]/t^m}/d\Omega^{n-1}_F \xleftarrow[d]{\simeq} tF[t]/t^m \otimes_F \Omega^{n-1}_F.
\]

Given an element $f \otimes d\log y_1\wedge\dotsb\wedge d\log y_{n-1} \in tF[t]/t^m \otimes_F \Omega^{n-1}_F$,
the element $\{\exp(f),y_1,\dotsc,y_{n-1}\} \in \mrm{K}^M_n(F[[t]])$ lifts it via 
\[
\xymatrix@1{
	\mrm{K}^M_n(F[[t]])\ar[r]^-{d\log} & \Omega^n_{F[t]/t^m} & \; tF[t]/t^m \otimes_F \Omega^{n-1}_F \ar@{_{(}->}[l]_-d .
}
\]
Therefore, the composite
\[
	\mrm{K}_n(F[[t]],t) \to \{\mrm{K}_n(F[t]/t^m,(t))\}_m \simeq \{tF[t]/t^m \otimes_F \Omega^{n-1}_F\}
\]
is isomorphic to a levelwise epimorphism, and thus the first map is a pro epimorphism.
This proves the lemma.
\end{proof}

\begin{proof}[Proof of Theorem \ref{th:henseliandvr}]

The Chern character $\hat{\ch}_n$ in Theorem \ref{thm:chhat} fits into the commutative diagram
\[
\xymatrix{
	0 \ar[r] & \{\mrm{K}_n(A,(\pi)^m)\}_{m,\bb{Q}} \ar[r] \ar[d]^{\hat{\ch}_n}
		& \mrm{K}_n(A)_{\bb{Q}} \ar[r]^-\beta \ar[d]^{\ch_n}_\simeq & \{\mrm{K}_n(A/\pi^m)\}_{m,\bb{Q}} \ar[r] & 0 \\
	& \{\CH^*(X|mD,n)\}_{m,\bb{Q}} \ar[r]^-\alpha & \CH^*(X,n)_{\bb{Q}} & &
}
\]
in $(\pro\msf{An})_{\bb{Q}}$.
By Lemma \ref{lem:keyK}, the upper sequence is exact.
By Bloch's comparison theorem in \cite{Bl86}, the middle vertical map $\ch_n$ is an isomorphism.
Hence, it follows that the left vertical map $\hat{\ch}_n$ is a pro monomorphism.

We shall show that the composite
\[
	\Theta:=\beta\circ\ch^{-1}\circ\alpha \colon \{\CH^*(X|mD,n)\}_{m,\bb{Q}} \to \{\mrm{K}_n(A/\pi^m)\}_{m,\bb{Q}}
\]
is the zero map.

Binda-Saito \cite{BS} has constructed the cycle map
\[
	\CH^i(X|mD,n) \to H^{2i-n}(\Omega^{\ge i}_{X|mD}),
\]
where $\Omega^j_{X|mD} = \Omega_A^j(\log D)\otimes A\pi^m$.
Note that we have a pro isomorphism $\{\Omega^j_{X|mD}\}_m \simeq \{\Omega_A^j\otimes A\pi^m\}_m$.
Hence, we have a commutative diagram
\[
\xymatrix{
	\{\CH^i(X|mD,n)\}_m \ar[r] \ar[d]	& \CH^i(X,n) \ar[d] & \\
	\{H^{2i-n}(\Omega_A^{\ge i}\otimes A\pi^m)\}_m \ar[r] & H^{2i-n}(\Omega_A^{\ge i}) \ar[r] & \{H^{2i-n}(\Omega_{A/\pi^m}^{\ge i})\}_m
}
\]
and the bottom composite is zero.
Here, the second vertical map is the usual cycle map to the de Rham cohomology, and the composite
\[
\xymatrix@1{
	\mrm{K}_n(A) \ar[r]^-{\ch} & \CH^*(X,n)_{\bb{Q}} \ar[r] & H^{2*-n}(\Omega_A^{\ge *}) & \HN_n(A) \ar[l]_-\simeq
}
\]
coincides with Goodwillie's Chern character by \cite{Wei93}.
Therefore, the composite
\[
\xymatrix{
	\{\CH^*(X|mD,n)\}_{m,\bb{Q}} \ar[r]^-\Theta & \{\mrm{K}_n(A/\pi^m)\}_{m,\bb{Q}} \ar[d]^c & \\
												& \{\HN_n(A/\pi^m)\}_{m,\bb{Q}} \ar[r]^-\simeq & \{H^{2*-n}(\Omega_{A/\pi^m}^{\ge *})\}_{m,\bb{Q}}
}
\]
equals zero, where $c$ is Goodwillie's Chern character and the last isomorphism is by the pro HKR theorem again.
(The pro HKR theorem may not yield a pro isomorphism for $\HN$ in general, but now the relative part $\HN_n(A/\pi^m,(\pi))$ is equal to $\HC_{n-1}(A/\pi^m,(\pi))$ for which we can apply the pro HKR theorem, and we obtain the above pro isomorphism by the five lemma.)

Consider the commutative diagram
\[
\xymatrix{
	\{\CH^*(X|mD,n)\}_{m,\bb{Q}} \ar[r]^-\Theta & \{\mrm{K}_n(A/\pi^m)\}_{m,\bb{Q}} \ar[r]^-\gamma \ar[d]^c & \mrm{K}_n(F)_{m,\bb{Q}} \ar[d]^{c_1} \\
	 & \{H^{2*-n}(\Omega_{A/\pi^m}^{\ge *})\}_{m,\bb{Q}} \ar[r] & H^{2*-n}(\Omega_F^{\ge *})_{m,\bb{Q}}
}
\]
where $c$ and $c_1$ are Goodwillie's Chern characters and $\gamma$ is the canonical map.
We have seen that $c\circ\Theta =0$, and it is clear that $\gamma\circ\Theta =0$.
We claim that the kernel of $c$ and $c_1$ are isomorphic, which implies that $\Theta=0$.
Indeed, the above square fits into the diagram
\[
\xymatrix@C-1pc{
	\{H^{2*-(n-1)}(\Omega_{A/\pi^m}^{\ge *})\} \ar[r] \ar@{->>}[d] & \{\mrm{K}^{\inf}_n(A/\pi^m)\} \ar[r] \ar[d]^\simeq
			& \{\mrm{K}_n(A/\pi^m)\} \ar[r]^-c \ar[d] & \{H^{2*-n}(\Omega_{A/\pi^m}^{\ge *})\} \ar@{->>}[d] \\
	H^{2*-(n-1)}(\Omega_F^{\ge *}) \ar[r] & \mrm{K}^{\inf}_n(F) \ar[r] & \mrm{K}_n(F) \ar[r]^-{c_1} & H^{2*-n}(\Omega_F^{\ge *})
}
\]
with exact rows.
Here, the first vertical map is surjective and the second vertical map is an isomorphism by Goodwillie's theorem \cite{Go86}.
Hence, $\ker c\simeq \ker c_1$ follows.

Consequently, we obtain a morphism 
\[
	\phi\colon \{\CH^*(X|mD,n)\}_{m,\bb{Q}} \to \{\mrm{K}_n(A,(\pi)^m)\}_{m,\bb{Q}}.
\]
It is clear that $\phi\circ\hat{\ch}_n=\id$ and that $\alpha\circ\hat{\ch}_n\circ\phi = \alpha$.
This completes the proof of Theorem \ref{th:henseliandvr}.
\end{proof}


\appendix
\setcounter{equation}{0}
\def\theequation{\Alph{section}\arabic{equation}}

\section{A lemma on local homotopy theory}\label{S:LocHtp}

The goal in this section is to prove Theorem \ref{lem:LocHtp}.
We fix a small site $\mcal{C}$.
We denote by $\PSh(\mcal{C})$ (resp.\ $s\PSh(\mcal{C})$) the category of presheaves (resp.\ simplicial presheaves) on $\mcal{C}$.
We endow $s\PSh(\mcal{C})$ with the local injective model structure, cf.\ \cite[Theorem 5.8]{Jar15}.
Let us begin with a general construction:
\begin{definition}\label{def:site_fibered}
Let $F\colon\Lambda\to\PSh(\mcal{C})$ be a functor with $\Lambda$ being an arbitrary small category.
We define the {\em site $\mcal{C}/F$ fibered over $F$} as follows:
The objects are all pairs $(X,\lambda,\alpha)$ with $X\in \mcal{C}$, $\lambda\in\Lambda$ and $\alpha\in F(\lambda)(X)$.
The morphisms from $(X,\lambda,\alpha)$ to $(Y,\mu,\beta)$ are all commutative diagrams in the category of presheaves on $\mcal{C}$
\[
\xymatrix{
	X \ar[r]^\alpha \ar[d] & F(\lambda) \ar[d]^{F(\theta)} \\
	Y \ar[r]^\beta & F(\mu)
}
\]
for some morphism $\theta$ in $\Lambda$.
The covering families of $(X,\lambda,\alpha)$ are 
\[
\xymatrix{
	\{U_i\} \ar[r] \ar[rd] & X \ar[d]^\alpha \\
	 & F(\lambda)
}
\]
where $\{U_i\}\to X$ is a covering of $\mcal{C}$.
\end{definition}

In this section, we only use the case $\Lambda=*$ or $\Lambda=\Delta^{\mrm{op}}$.
In the latter case, a functor $F\colon \Delta^{\mrm{op}}\to \PSh(\mcal{C})$ is just a simplicial presheaf.
In principle, we denote by $F$ a simplicial preahseaf.

\subsection{Sites fibered over presheaves}

Let $X$ be a presheaf on $\mcal{C}$.
The forgetful functor $q\colon \mcal{C}/X \to \mcal{C}$ induces an adjunction
\begin{equation}\label{adj-q}
	q^*\colon s\PSh(\mcal{C}/X) \rightleftarrows s\PSh(\mcal{C}) \colon q_*.
\end{equation}
Concretely, for $G\in s\PSh(\mcal{C}/X)$ and $U\in\mcal{C}$, the functor $q^*$ is given by
\[
	q^*(G)(U) = \bigsqcup_{\phi\colon U\to X}G(\phi).
\]
Then, as in the proof of \cite[Lemma 5.23]{Jar15}, we see that $q^*$ preserves cofibrations and local weak equivalences.
Therefore:
\begin{lemma}\label{lem:adj-q}
The adjunction (\ref{adj-q}) is a Quillen adjunction with respect to the local injective model structures.
\end{lemma}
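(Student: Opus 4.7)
The plan is to identify $s\PSh(\mcal{C}/X)$ with the slice category $s\PSh(\mcal{C})/X$ via the standard equivalence sending $F \in s\PSh(\mcal{C}/X)$ to the pair $(\tilde F, \tilde F \to X)$ with $\tilde F(U) = \coprod_{\alpha \in X(U)} F(U,\alpha)$ and the evident projection to $X$. A short computation with the adjunction $q^* \dashv q_*$ shows that under this equivalence $q^*$ corresponds to the forgetful functor and $q_*$ corresponds to $G \mapsto G \times X$ equipped with $\mrm{pr}_X$. The strategy is then to prove that, under this identification, the local injective model structure on $s\PSh(\mcal{C}/X)$ coincides with the slice model structure on $s\PSh(\mcal{C})/X$ induced from the local injective model structure on $s\PSh(\mcal{C})$. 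Once this is established, the lemma follows from the standard fact that the forgetful functor out of a slice category is always left Quillen.

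Matching the two model structures reduces to matching cofibrations and weak equivalences. For cofibrations this is immediate: in both structures the cofibrations are the monomorphisms, and the forgetful functor from a slice category preserves and reflects monomorphisms (a map of presheaves over $X$ is pointwise and levelwise injective iff its underlying map of presheaves is).

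The substantive step is the weak equivalences. The key observation is that every point of the site $\mcal{C}/X$ is of the form $(p,\xi)$ with $p$ a point of $\mcal{C}$ and $\xi \in X_p$, and for $F \in s\PSh(\mcal{C}/X)$ corresponding to $\tilde F \to X$ the stalk $F_{(p,\xi)}$ is exactly the fiber of $\tilde F_p \to X_p$ over $\xi$. Taking the coproduct over $\xi$ gives $\tilde F_p \cong \coprod_{\xi \in X_p} F_{(p,\xi)}$. Since $X_p$ is a discrete set, a map of simplicial sets over $X_p$ is a weak equivalence iff each fiber map is a weak equivalence; hence a map in $s\PSh(\mcal{C}/X)$ is a local weak equivalence iff its image under $q^*$ is a local weak equivalence in $s\PSh(\mcal{C})$.

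The main obstacle I anticipate is making the stalk argument rigorous when $\mcal{C}$ does not have enough points. In that case I would either invoke Boolean localization, or replace stalks throughout by Jardine's sheaves of homotopy groups, for which the analogous coproduct decomposition still holds sheaf-theoretically over $X$; the argument then goes through as before.
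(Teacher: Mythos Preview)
Your argument is correct, and the stalk computation you give is essentially the content behind the reference the paper invokes. However, the routes are packaged differently. The paper's proof is a two-line direct check: it writes down the formula
\[
q^*(F)(U) = \bigsqcup_{\phi\colon U\to X} F(\phi)
\]
and observes (deferring to Jardine's Lemma 5.23) that this formula makes it evident that $q^*$ preserves monomorphisms and local weak equivalences, which is all one needs for $q^*$ to be left Quillen. You instead pass through the equivalence $s\PSh(\mcal{C}/X)\simeq s\PSh(\mcal{C})/X$, identify $q^*$ with the forgetful functor, match the intrinsic local injective structure with the slice model structure, and then appeal to the general fact that the forgetful functor from a slice is left Quillen.

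What this buys: your route is self-contained and explains \emph{why} the weak equivalences agree (via the description of points of $\mcal{C}/X$ as pairs $(p,\xi)$ and the fiberwise decomposition $\tilde F_p \cong \coprod_{\xi} F_{(p,\xi)}$), whereas the paper simply cites Jardine. On the other hand, your detour through the slice identification and model-structure comparison is strictly more work than necessary: once you have shown that $q^*$ preserves monomorphisms and local weak equivalences---which is exactly the stalk argument you wrote---you are already done, without ever needing to name the slice category or invoke the general slice-Quillen fact. Your caveat about sites without enough points is well taken; the paper sidesteps this by citing Jardine, whose argument uses Boolean localization.
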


\subsection{Sites fibered over simplicial presheaves}\label{S:simplicialfibered}

Let $F$ be a simplicial presheaf on $\mcal{C}$.
The canonical functor $j_n\colon \mcal{C}/F_n \to \mcal{C}/F$ induces an adjunction
\begin{equation}\label{adj-j_n}
	j_n^*\colon s\PSh(\mcal{C}/F_n) \rightleftarrows s\PSh(\mcal{C}/F) \colon j_{n*}.
\end{equation}
For $G\in s\PSh(\mcal{C}/F_n)$ and $(X \xrightarrow{\alpha} F_m)\in \mcal{C}/F$, we have 
\[
	j_n^*(G)(X \xrightarrow{\alpha} F_m) = \bigsqcup_{\theta\colon [n] \to [m]}G(X \xrightarrow{\theta^*\alpha} F_n).
\]
It follows that $j_n^*$ preserves cofibrations and local weak equivalences.
Hence:
\begin{lemma}\label{lem:adj-j_n}
For every $n\ge 0$, the adjunction (\ref{adj-j_n}) is a Quillen adjunction with respect to the local injective model structures.
\end{lemma}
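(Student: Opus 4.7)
The plan is to follow the exact template of Lemma \ref{lem:adj-q}, namely to verify that the left adjoint $j_n^*$ preserves cofibrations and local weak equivalences. This is sufficient for the adjunction to be Quillen because cofibrations in the local injective model structure are the monomorphisms, and a left adjoint preserving both cofibrations and all weak equivalences automatically preserves trivial cofibrations.

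First I would justify the displayed formula
\[
	j_n^*(G)(U \xrightarrow{\alpha} F_m) = \bigsqcup_{\theta\colon [n] \to [m]}G(U \xrightarrow{\theta^*\alpha} F_n)
\]
by computing the relevant pointwise left Kan extension along $j_n$. The comma category indexing this colimit decomposes as a disjoint union over the simplicial structure maps $\theta\colon [n]\to[m]$, each summand carrying a terminal object $(U \xrightarrow{\theta^*\alpha} F_n, \id_U)$; the colimit on each summand is therefore represented by the value of $G$ on this terminal object.

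With the formula in hand, preservation of cofibrations is immediate: monomorphisms of simplicial presheaves are sectionwise, and a disjoint union of injective maps of simplicial sets is injective. For preservation of local weak equivalences, I would use that the coverings of $(U,\alpha)\in \mcal{C}/F$ arise solely from coverings of $U$ in $\mcal{C}$ and leave the coordinate $\alpha$ (and hence the index set $\Hom_\Delta([n],[m])$ in the displayed formula) fixed; consequently $j_n^*$ is computed sectionwise as a coproduct over a fixed index set, and such coproducts preserve local weak equivalences because sheafification commutes with coproducts and weak equivalences of simplicial sets are stable under disjoint union.

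The only delicate point is this last step, namely the verification that the Grothendieck topology on $\mcal{C}/F$ interacts with the formula in the simple way described; once this is established, the conclusion that $(j_n^*, j_{n*})$ is a Quillen adjunction is formal, and the argument effectively reduces to the presheaf case already handled in Lemma \ref{lem:adj-q}.
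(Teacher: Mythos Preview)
Your proposal is correct and follows the same approach as the paper: the paper simply displays the formula for $j_n^*(G)$ and asserts that from it one sees $j_n^*$ preserves cofibrations and local weak equivalences, exactly as in the template of Lemma~\ref{lem:adj-q}. Your write-up supplies the details the paper omits (the Kan-extension justification of the formula and the reason coproducts over a fixed index set preserve local weak equivalences), but the strategy is identical.
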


\begin{remark}\label{rem:projective}
The adjunctions (\ref{adj-q}) and (\ref{adj-j_n}) are also Quillen adjunctions with respect to the local projective model structures.
Since projective fibrations are defined levelwise, it is clear that the forgetful functors $q_*$ and $j_{n*}$ preserve projective fibrations and trivial projective fibrations.
\end{remark}

For simplicial presheaves $G,H$ on $\mcal{C}$, let $\bhom(G,H)$ be the \textit{function complex}, i.e., the simplicial set given by
\[
	\bhom(G,H)_n := \Hom_{s\PSh(\mcal{C})}(G\times \Delta^n,H).
\]
Let $j\colon \mcal{C}/F \to \mcal{C}$ be the forgetful functor, which induces
\[
	j_*\colon s\PSh(\mcal{C}) \to s\PSh(\mcal{C}/F).
\]
Here is the main result in this section, which is a generalization of \cite[Proposition 5.29]{Jar15}.
\begin{theorem}\label{lem:LocHtp}
Let $Z$ be an injective fibrant object in $s\PSh(\mcal{C})$ and $W$ an injective fibrant replacement of $j_*Z$ in $s\PSh(\mcal{C}/F)$.
Then we have a weak equivalence
\[
	\bhom(F,Z) \simeq \bhom(*,W).
\]
In particular, for any presheaf $A$ of complexes of abelian groups on $\mcal{C}$, we have an isomorphism
\[
	H^*(F,A):=\Hom_{\Ho(\mcal{C})}(F,K(A,*)) \simeq H^*(\mcal{C}/F,j_*A).
\]
\end{theorem}

\subsection{Preliminaries to the proof}

\subsubsection{Homotopy limits}
Let $I$ be a small category.
Recall that the homotopy limit of a functor $X\colon I\to s\Set$ ($s\Set=$ the category of simplicial sets) is defined by
\[
	\holim_{i\in I} X(i) := \bhom(B(I\downarrow-), X),
\]
where $I\downarrow-$ is the functor $I\to \msf{Cat}$ assigning the comma category $I\downarrow i$ to each $i\in I$.
Note that the final map $B(I\downarrow-) \to *$ in $s\Set^I$ is a sectionwise weak equivalence.
Hence, in case $X$ is an injective fibrant, we have a weak equivalence
\begin{equation}\label{formula1:holim}
	\holim_{i\in I}X(i) \simeq \bhom(*,X) = \lim_{i\in I} X(i).
\end{equation}

\begin{lemma}\label{lem:holim}
Let $Z$ be a sectionwise fibrant object in $s\PSh(\mcal{C}/F)$.
Then there exists a natural weak equivalence
\[
	\holim_{X\in\mcal{C}/F}Z(X) \simeq \holim_{m\in\Delta} \holim_{X\in \mcal{C}/F_m} Z(X).
\]
\end{lemma}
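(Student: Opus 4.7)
The plan is to identify $\mcal{C}/F$ with a Grothendieck construction over $\Delta^{op}$ and then invoke a Fubini-type theorem for homotopy limits.

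First I would observe that $\mcal{C}/F$ coincides with the Grothendieck construction of the functor $G\colon \Delta^{op}\to \msf{Cat}$ sending $[n]\mapsto \mcal{C}/F_n$, where a map $\theta\colon [m]\to[n]$ in $\Delta$ acts by $G(\theta)\colon \mcal{C}/F_n\to \mcal{C}/F_m$, $(X,\alpha)\mapsto (X,\theta^*\alpha)$. Unpacking definitions, a morphism in $\mcal{C}/F$ from $(X,\alpha\in F_n(X))$ to $(Y,\beta\in F_m(Y))$ --- namely $f\colon X\to Y$ together with $\theta^*\colon F_n\to F_m$ such that $\theta^*\alpha=f^*\beta$ --- matches exactly the data of a morphism $([n],(X,\alpha))\to([m],(Y,\beta))$ in the Grothendieck construction. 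Consequently the projection $p\colon \mcal{C}/F\to \Delta^{op}$, $(X,\alpha\in F_n(X))\mapsto[n]$, is a Grothendieck opfibration with fiber $\mcal{C}/F_n$ over $[n]$, the cocartesian lift of a morphism $\theta\colon[n]\to[m]$ in $\Delta^{op}$ at $(X,\alpha)$ being $(\id_X,\theta^*)\colon (X,\alpha)\to(X,\theta^*\alpha)$.

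Second I would invoke the standard Fubini theorem for homotopy limits along an opfibration: for any sectionwise-fibrant diagram $Z\colon I\to s\Set$ and Grothendieck opfibration $p\colon I\to J$,
\[
\holim_I Z\simeq \holim_{j\in J^{op}}\holim_{I_j}Z\vert_{I_j},
\]
the inner $\holim$ assembled into a $J^{op}$-diagram via the contravariant action of cocartesian transport. Specializing to $J=\Delta^{op}$ and $J^{op}=\Delta$: a morphism $\theta\colon[m]\to[n]$ in $\Delta$ induces the transport $\mcal{C}/F_n\to \mcal{C}/F_m$, hence $\holim_{\mcal{C}/F_m}Z\to \holim_{\mcal{C}/F_n}Z$, producing the cosimplicial object on the right-hand side and yielding the desired equivalence. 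To prove the Fubini equivalence itself I would combine \eqref{formula1:holim} applied after an injective-fibrant replacement $Z\to Z'$ in $s\PSh(\mcal{C}/F)$ with the trivial fact that restriction preserves sectionwise fibrancy (so each $\holim_{\mcal{C}/F_n}Z$ is a Kan complex). One can then either appeal to Thomason's theorem $N(\int G)\simeq \hocolim_{\Delta^{op}}N(G(\bullet))$ together with the duality between function complexes and nerves of comma categories, or verify the claim directly from the Bousfield--Kan formula for $\holim$.

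The main obstacle I anticipate is the bookkeeping of variances and fibrancy: matching the cosimplicial structure on the inner $\holim$'s to the outer $\holim_\Delta$ up to coherent homotopy, and ensuring that only sectionwise (rather than injective) fibrancy of $Z$ is needed for the inner $\holim$'s to be homotopy-invariant. Once the Grothendieck construction identification and the general Fubini statement over an opfibration are in hand, however, the remainder of the argument is essentially formal.
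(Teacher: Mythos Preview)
Your approach is correct and conceptually sound: identifying $\mcal{C}/F$ with the Grothendieck construction of $[n]\mapsto \mcal{C}/F_n$ over $\Delta^{\mrm{op}}$ and then invoking a Fubini/Thomason-type decomposition is a valid route to the lemma. The paper's proof is in fact a concrete unpacking of exactly this strategy rather than a genuinely different argument. Instead of citing a general Fubini theorem, the paper works directly with the Bousfield--Kan model for $\holim$: it writes down an explicit map of simplicial presheaves on $\mcal{C}/F$,
\[
\Psi\colon \hocolim_{m\in\Delta^{\mrm{op}}} j_m^{*}\, B\bigl((\mcal{C}/F_m)^{\mrm{op}}\downarrow -\bigr)\longrightarrow B\bigl((\mcal{C}/F)^{\mrm{op}}\downarrow -\bigr),
\]
checks that both sides are sectionwise contractible (so $\Psi$ is a sectionwise weak equivalence) and projective cofibrant, and then obtains the result by applying $\bhom(-,Z)$ together with the $(j_m^{*},j_{m*})$ adjunctions. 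This is precisely the ``Thomason's theorem plus duality with function complexes'' option you list.

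What each approach buys: your abstract formulation makes the structural reason for the equivalence transparent and would let you cite rather than reprove. The paper's hands-on argument is self-contained and, more importantly, sidesteps the coherence issue you correctly flag: the cosimplicial structure on the inner $\holim$'s never needs to be made strict, because the paper stays on the $\hocolim$ side where the functors $j_m^{*}$ supply honest maps of presheaves and the adjunction produces the comparison with $\holim_{\mcal{C}/F_m}$ only at the very end. Your informal description of the transition maps (``transport $\mcal{C}/F_n\to\mcal{C}/F_m$, hence $\holim_{\mcal{C}/F_m}Z\to\holim_{\mcal{C}/F_n}Z$'') glosses over the fact that restriction along transport lands in $\holim_{\mcal{C}/F_n}$ of the \emph{transported} diagram, not of $Z|_{\mcal{C}/F_n}$; the two are related only by a further natural transformation coming from the cocartesian morphisms. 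This is exactly the bookkeeping you anticipate, and it is what the paper's explicit construction of $\Psi$ is designed to handle.
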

\begin{proof}
We construct a morphism
\begin{equation}\label{eq1:hocolim}
	\Psi\colon \hocolim_{m\in\Delta^{\mrm{op}}}\Bigl(j_m^*B\bigl((\mcal{C}/F_m)^{\mrm{op}}\downarrow -\bigr)\Bigr) \xrightarrow{\simeq} B\bigl((\mcal{C}/F)^{\mrm{op}}\downarrow -\bigr)
\end{equation}
in $s\PSh(\mcal{C}/F)$, and show that it is a sectionwise weak equivalence between projective cofibrant objects in $s\PSh(\mcal{C}/F)$.
Let $X\xrightarrow{\alpha}F_n$ be an object in $\mcal{C}/F$.
Then we have
\[
	j_m^*\Bigl(B\bigl((\mcal{C}/F_m)^{\mrm{op}}\downarrow -\bigr)\Bigr)(X,\alpha) = \bigsqcup_{\theta\colon[m]\to[n]}B\bigl((\mcal{C}/F_m)^{\mrm{op}}\downarrow(X,\theta\alpha)\bigr).
\]
Hence, the sections at $(X\xrightarrow{\alpha}F_n)$ of the left hand side of (\ref{eq1:hocolim}) are equal to the coequalizer of
\begin{multline}\label{eq2:hocolim}
	\bigsqcup_{[l]\xrightarrow{\sigma}[m]\xrightarrow{\theta}[n]}B\bigl((\mcal{C}/F_m)^{\mrm{op}}\downarrow(X,\theta\alpha)\bigr) \times B(\Delta\downarrow l) \\
		\rightrightarrows \bigsqcup_{\theta\colon[m]\to[n]}B\bigl((\mcal{C}/F_m)^{\mrm{op}}\downarrow(X,\theta\alpha)\bigr) \times B(\Delta\downarrow m).
\end{multline}
For each $\theta\colon[m]\to[n]$, we define a functor
\[
	\bigl((\mcal{C}/F_m)^{\mrm{op}}\downarrow(X,\theta\alpha)\bigr)\times(\Delta\downarrow m) \to \bigl((\mcal{C}/F)^{\mrm{op}}\downarrow (X,\alpha)\bigr)
\]
by sending
\[
\xymatrix@C-1pc{
	X \ar[r]^\alpha \ar[d] 	& F_n \ar[r]^\theta 	& F_m \ar[r]^\sigma \ar@{=}[d] 	& F_l \\
	Y \ar[rr]^\beta 		& 					& F_m 						&
}
\quad\text{to}\quad
\xymatrix{
	X \ar[r]^\alpha \ar[d] 	& F_n \ar[d]^{\sigma\theta} \\
	Y \ar[r]^{\sigma\beta} 	& F_l.
}
\]
These functors induce a morphism of simplicial sets
\[
	\bigsqcup_{\theta\colon[m]\to[n]}B\bigl((\mcal{C}/F_m)^{\mrm{op}}\downarrow(X,\theta\alpha)\bigr) \times B(\Delta\downarrow m)
		\to B\bigl((\mcal{C}/F)^{\mrm{op}}\downarrow (X,\alpha)\bigr),
\]
which is functorial in $(X,\alpha)$ and kills the difference of (\ref{eq2:hocolim}).
Hence, it induces the desired morphism $\Psi$.

The coequalizer of (\ref{eq2:hocolim}) is also equal to
\[
	\hocolim_{\theta\colon[m]\to[n]} B\bigl((\mcal{C}/F_m)^{\mrm{op}}\downarrow (X,\theta\alpha)\bigr),
\]
where $\theta$ runs through $\Delta\downarrow n$, and it is contractible.
It follows that the source and the target of $\Psi$ are sectionwise contractible, and thus $\Psi$ is a sectionwise weak equivalence.

According to \cite[Corollary 14.8.8]{Hir03}, diagrams of the form $B(\mcal{E}\downarrow -\bigr)$ are projective cofibrant.
Since the adjunction (\ref{adj-j_n}) is a Quillen adjunction with respect to the projective model structure (Remark \ref{rem:projective}), $j_m^*B((\mcal{C}/F_m)^{\mrm{op}}\downarrow -)$ is projective cofibrant.
Hence, both sides of (\ref{eq1:hocolim}) are projective cofibrant.

It follows that
\[
\begin{split}
	\holim_{\mcal{C}/F}j_*Z &= \bhom\Bigl( B\bigl((\mcal{C}/F)^{\mrm{op}}\downarrow -\bigr), Z\Bigr) \\
							&\simeq \bhom\biggl( \hocolim_{m\in\Delta^{\mrm{op}}}\Bigl(j_m^*B\bigl((\mcal{C}/F_m)^{\mrm{op}}\downarrow -\bigr)\Bigr), Z\biggr) \\
							&\simeq \holim_{m\in \Delta} \bhom\Bigl( j_m^*B\bigl((\mcal{C}/F_m)^{\mrm{op}}\downarrow -\bigr), Z\Bigr) \\
							&\simeq \holim_{m\in \Delta} \bhom\Bigl(B\bigl((\mcal{C}/F_m)^{\mrm{op}}\downarrow -\bigr), j_{m*}Z\Bigr) \\
							&= \holim_{m\in \Delta} \holim_{\mcal{C}/F_m} j_{m*}Z.
\end{split}
\]
The first isomorphism follows from \cite[18.4.5]{Hir03}, the second one follows from \cite[18.1.10]{Hir03} and the third one is the adjunction (\ref{adj-j_n}) of $j^*_m$ and $j_{m*}$.
\end{proof}

\subsubsection{Cosimplicial spaces}
We call a cosimplicial object in $s\Set$ a cosimplicial space, and denote the category of cosimplicial spaces by $cs\Set$. 
Let $A$ be a cosimplicial space and $S$ a simplicial presheaf on a site $\mcal{C}$.
We define a simplicial presheaf $A\otimes S$ to be the coequalizer of
\[
	\bigsqcup_{\theta\colon [m] \to [n]}A^m\times S_n \rightrightarrows \bigsqcup_{[n]}A^n\times S_n.
\]
Let $X$ be another simplicial presheaf on $\mcal{C}$.
We define a cosimplicial space $\underline{\Hom}(S,X)$ by $\underline{\Hom}(S,X)^n_m := \Hom(S_n,X_m)$.
\begin{lemma}\label{lem:adj-cs}
There is a Quillen adjunction
\begin{equation}\label{adj-cs}
	-\otimes S \colon cs\Set \rightleftarrows s\PSh(\mcal{C}) \colon \underline{\Hom}(S,-)
\end{equation} 
with respect to the Bousfield-Kan model structure on $cs\Set$ \cite[X, \S5]{BK72} and the injective model structure on $s\PSh(\mcal{C})$. 
\end{lemma}
\begin{proof}
It is clear that (\ref{adj-cs}) is an adjunction.
We show that $\underline{\Hom}(S,-)$ preserves fibrations and trivial fibrations.

Let $DS_n$ be the coequalizer of
\[
	\bigsqcup_{i<j}S_{n-2} \rightrightarrows \bigsqcup_i S_{n-1},
\]
which is a subpresheaf of $S_n$.
Then, for a simplicial presheaf $X$, $\bhom(DS_n, X)$ is the $(n-1)$-th matching space (\cite[X, \S4.5]{BK72}) of $\underline{\Hom}(S,X)$.
Let $X\to Y$ be an injective fibration (resp.\ injective trivial fibration) of simplicial presheaves.
Since $DS_n\to S_n$ is a cofibration, the induced map
\[
\xymatrix@R-1.5pc{
	\bhom(S_n, Y) \ar@{=}[d] \ar[r] 	& \bhom(S_n, X) \times_{\bhom(DS_n,X)} \bhom(DS_n,Y) \ar@{=}[d] \\
	\underline{\Hom}(S,Y)^n  		& \underline{\Hom}(S, X)^n\times_{M^{n-1}\underline{\Hom}(S,X)}M^{n-1}\underline{\Hom}(S,Y)
}
\]
is a fibration (resp.\ trivial fibration).
This proves the lemma.
\end{proof}

\subsection{Proof of Theorem \ref{lem:LocHtp}}

Now, we are given an injective fibrant object $Z$ in $s\PSh(\mcal{C})$ and an injective fibrant replacement $W$ of $j_*Z$ in $s\PSh(\mcal{C}/F)$.

Firstly, we show that $j_*Z \to W$ is a sectionwise weak equivalence.
By Lemma \ref{lem:adj-j_n}, $j_{n*} \colon s\PSh(\mcal{C}/F) \to s\PSh(\mcal{C}/F_n)$ preserves injective fibrations.
Put $q_n := j\circ j_n\colon \mcal{C}/F_n \to \mcal{C}$.
By Lemma \ref{lem:adj-q}, $q_{n*}\colon s\PSh(\mcal{C}) \to s\PSh(\mcal{C}/F_n)$ also preserves injective fibrations.
Hence, $q_{n*}Z \to j_{n*}W$ is a local weak equivalence between fibrant objects, and thus a sectionwise weak equivalence for every $n$.

We have seen that $j_*Z \to W$ is a sectionwise weak equivalence between sectionwise fibrant objects.
Hence, by \cite[XI, 5.6]{BK72}, we have a weak equivalence
\[
	\holim_{X\in\mcal{C}/F}Z(X) \simeq \holim_{X\in\mcal{C}/F}W(X).
\]
Since $W$ is an injective fibrant object on $\mcal{C}/F$ (locally, and thus for the indiscrete topology), it follows from (\ref{formula1:holim}) that the right hand side of the above is weak equivalent to $\bhom(*,W)$.
Hence, it remains to show that there is a weak equivalence
\begin{equation}\label{formula3:holim}
	\holim_{X\in\mcal{C}/F}Z(X) \simeq \bhom(F,Z).
\end{equation}

Since $F$ is isomorphic to $\Delta\otimes F$, it follows from the adjunction (\ref{adj-cs}) that we have an isomorphism
\[
	\bhom(F,Z) \simeq \bhom(\Delta,\underline{\Hom}(F,Z)).
\]
Now, $\underline{\Hom}(F,Z)$ is the cosimplicial space whose degree $n$ part is $\lim_{X\in \mcal{C}/F_n}Z(X)$.
Moreover, by Lemma \ref{lem:adj-cs}, $\underline{\Hom}(F,Z)$ is a fibrant cosimplicial space.
Therefore, by \cite[XI, 4.4]{BK72},  
\begin{equation}\label{eq1:LocHtp}
	\bhom(F,Z) \simeq \holim_\Delta \lim_{X\in \mcal{C}/F_n}Z(X).
\end{equation}
Since $q_{n*}Z$ is injective fibrant by Lemma \ref{lem:adj-q}, it follows from (\ref{formula1:holim}) that the canonical map
\[
	\lim_{X\in \mcal{C}/F_n}Z(X) \xrightarrow{\simeq} \holim_{X\in \mcal{C}/F_n}Z(X)
\]
is a weak equivalence between fibrant objects.
Therefore,
\begin{equation}\label{eq2:LocHtp}
	\holim_\Delta \lim_{X\in \mcal{C}/F_n}Z(X) \simeq  \holim_\Delta \holim_{X\in \mcal{C}/F_n}Z(X).
\end{equation}
By Lemma \ref{lem:holim} and (\ref{eq1:LocHtp}, \ref{eq2:LocHtp}), we obtain the desired formula (\ref{formula3:holim}).


\setcounter{equation}{0}
\section{Preliminaries on algebraic cycles}\label{CyclePrel}

\subsection{Moving lemma with modulus}\label{interProd}

Let $(X,D)\in \MSm $. 
By a {\em family of constructible subsets} $\mcal{C}=\{ C_d \} _{d\in \bbZ }$ of $X\setminus D$ we mean a non-decreasing family $C_d\subset C_{d+1}$ such that $\dim (C_d)\le d$ and $C_{\dim (X)}=X$.
Let $z^i_{\mcal{C}}(X|D,\bullet )\subset z^i(X|D,\bullet )$ be the subcomplex of cycles $V\in z^i(X|D,n)$ such that for every $d\in \bbZ $ and map of cubes
$\square ^{m}\to \square ^n$, the following inequality of dimensions holds:
$\dim \bigl( |V|\times _{X\times \square ^n} (C_d\times \square ^m) \bigr) \le (d+m)-i .
$ 
Of course, it suffices to consider face maps $\square ^m\hookrightarrow \square ^n$.
When $\mcal{C}$ is the trivial family $\mcal{C}_{\mathrm{triv}}$ characterized by $C_{\dim (X)-1}=\emptyset $, it is the same as $z^i(X|D,\bullet )$.
Given an equidimensional $k$-scheme $Y$ of finite type, 
we consider the presheaf $z^i_{\mcal{C}}(-\times Y|D\times Y,\bullet )$ on $X_\Nis $ defined by
\begin{equation*}
\bigl( U\xrightarrow{f}X \bigr) \mapsto
z^i_{\{ f^{-1}(C_{d-\dim (Y)})\times Y\} _{d} } (U\times Y|D_U\times Y,\bullet ).
 \end{equation*}
The case $Y=\Spec (k)$ is of primary importance, but we need the $Y=\bbP ^{r-1}$ case as well when we consider projective bundles.
\begin{theorem}{\upshape \cite[Theorem 2]{Kai}}
\label{MovingVariant}
In the notation as above, the inclusion
$z^i_{\mcal{C}}(-\times Y|D\times Y,\bullet )\hookrightarrow z^i(-\times Y|D\times Y,\bullet )$  
is a quasi-isomorphism on $X_\Nis $.
\end{theorem}
Cycle-theoretic operations often require proper intersection conditions for their well-definedness.
If we can find a family $\mcal{C}$ such that the operation is always defined on $z^i_{\mcal{C}} (-\times Y|D\times Y,\bullet )$,
Theorem \ref{MovingVariant} allows us to conclude that the operation is well-defined in the derived category $D(X_\Nis )$.
For example, for a functor $F\colon \Lambda \to \MSm $ as in \S \ref{SetUpProjBund} and an equidimensional $k$-scheme $Y$, 
the method in \cite[p.94]{Levine} (say)
applied to the morphisms in $\Lambda $
gives a canonical family $\mcal{C}(\lambda )$ on $X_\lambda $
such that the association
\begin{equation*}
((X,D),\lambda ,f)\mapsto z^i_{C(\lambda )} (X\times Y|D\times Y,\bullet )
 \end{equation*}
is a presheaf on $\MSm ^*$. This and a similar argument give the presheaves
$z^i_\rel $ and $p_*z^i_\rel $ in \S \ref{SetUpProjBund}.

\subsubsection{Further example: intersection product}\label{Sec:FurtherExample}

Given a cycle $W \in z^j((X\setminus D)\times Y,n)$,
the cycles $V\in z^i(X|D,m )$ such that the intersection product
$(V \times Y)\cdot W  \in z^{i+j}(X\times Y|D\times Y,m+n)$ 
is well-defined form a subcomplex of the form
$z^i_{\mcal{C}}(X|D,\bullet )$.
By Theorem \ref{MovingVariant}, it is isomorphic to $z^i(-|D,\bullet )$ in $D(X_\Nis )$.
If $W$ vanishes by the differential in $z^j((X\setminus D)\times Y,\bullet )$, we get a map of complexes 
${(-\times Y)\cdot W }\colon  z^i(-|D,\bullet )
\to
z^{i+j}(-\times Y|D\times Y,\bullet )$
in $D(X_\Nis )$.

Or, letting $W$ vary in $z^j((-\setminus D)\times Y,\bullet )$, we get a subcomplex 
$z^i(-|D,\bullet )\Dtimes z^j((-\setminus D)\times Y,\bullet )$ of the usual tensor $\otimes $ where the intersection product is well-defined.
By the fact that a columnwise quasi-isomorphism of bicomplexes (suitably bounded) induces a quasi-isomoprhism on the total complexes, we get a diagram in $D(X_\Nis )$:
\begin{equation*}\begin{array}{ccc}
z^i(-|D,\bullet )\Dtimes z^j((-\setminus D)\times Y,\bullet )&
\xrightarrow{(-\times Y)\cdot (-)}&
z^{i+j}(-\times Y|D\times Y,\bullet ).
\\
\downarrow \simeq &{}
\\
z^i(-|D,\bullet )\otimes z^j((-\setminus D)\times Y,\bullet )&{}
\end{array} \end{equation*}
This is used when we construct maps $ p^*(-)\cdot \xi ^j$ in \S \ref{SubsecProjBundle}.
Also, since $z^j(X|D,\bullet )\subset z^j(X\setminus D,\bullet )$, we get intersection product
$z^i_\rel \otimes z^j_\rel \to z^{i+j}_\rel $
in $D(\MSm ^*)$.

\subsubsection{Yet another example: algebraic join}\label{Subsec:JoinWellDef}
In the situation of \S \ref{SecJoin}, 
for each $W\in z^j(\bbP ^{s-1}_{X\setminus D},n)$,
the cycles $V\in z^i(\bbP ^{r-1}_{X}|\bbP ^{r-1}_D ,m)$ such that the join
$V\# W \in z^{i+j}(\bbP ^{r+s-1}_{X}|\bbP ^{r+s-1}_D,m+n )$ is well-defined form a subcomplex of the form
$z^i_{\mcal{C}}(\bbP ^{r-1}_X|\bbP ^{r-1}_D,\bullet )$ for some $\mcal{C}$ on $X$.
One can find such a $\mcal{C}$ by applying \cite[p.94]{Levine} to the fiber dimensions of the projections $W_{|F}\to X$ with $F\subset \square ^n$ being faces.
By varying $W$ as in the previous paragraph, we get a map 
$\#\colon
z^i(\bbP ^{r-1}_{(-)}|\bbP ^{r-1}_D,\bullet )\Dtimes 
z^j(\bbP ^{s-1}_{(-\setminus D)},\bullet )
\to
z^i(\bbP ^{r+s-1}_{(-)}|\bbP ^{r+s-1}_D,\bullet ) 
$ 
in $D(X_\Nis )$. Carrying out this argument on $\MSm ^*/\Volo _r^\rel \times \Volo _r^\rel $
(or more generally on $\MSm ^*/B\GL _r\times B\GL _s$),
we get the following diagram:
\begin{equation*}
p_{1*}z^i_\rel \otimes p_{2*}z^j
\xleftarrow{\simeq }
p_{1*}z^i_\rel \Dtimes p_{2*}z^j\xrightarrow{\# } 
p_*z^{i+j}_\rel 
 \end{equation*}
which gives the algebraic join in $D(\MSm ^*/B\GL _r\times B\GL _s)$.

\subsection{Computing cup product}\label{Appendix:cup}

Here we give a formula which gives us explicit representatives for the cup product.
This is used in \S\S \ref{GlobalProjBund} and \ref{UnivChern}, where algebraic cycles are involved.
In \S \ref{Sec:PropInt}, we prove results saying that the proper intersection of the lowest-degree representatives (in some sense) implies the proper intersection of all representatives, whereby we get well-defined intersection products of cohomology classes.

Consider a site $\mcal{C}$.
Let us agree that the cup product
of two cohomology classes
$\phi  \in H^i(\mcal{C},{F})$ and $\psi  \in H^j(\mcal{C},G)$
(where ${F},{G}$ are objects in the derived category of complexes of abelian sheaves) is defined as the derived tensor of the two maps
$\bbZ \to F[i]$, $\bbZ \to G[j]$ representing them:
\begin{equation*}
\phi \cdot \psi := [ \bbZ  =\bbZ \otimes ^L\bbZ \xrightarrow{\phi \otimes ^L\psi } F\otimes ^L G[i+j]] \quad \in
H^{i+j}(\mcal{C},F\otimes ^L G).
 \end{equation*}
If we are given a map into another object
$ F\otimes ^L G\to E $,
then we get its image in $H^{i+j}(\mcal{C},E)$ which is often denoted by $\phi \cdot \psi $ again.

If we are given a quasi-isomorphism $\varepsilon\colon \mcal{Z}\to \bb{Z} $ and a morphism $D\colon \mcal{Z}\to \mcal{Z}\otimes ^L \mcal{Z} $ 
such that $(\varepsilon \otimes ^L \varepsilon )\circ D=\varepsilon $ as maps $\mcal{Z}\rightrightarrows \bbZ \otimes ^L \bbZ =\bbZ $,
then the cup product of classes represented by maps
$ \phi  \colon \mcal{Z}\to F[i] $ and $ \psi  \colon \mcal{Z} \to G[j]  $
can be computed as the composition
$ \mcal{Z}\xrightarrow{D} \mcal{Z} \otimes ^L \mcal{Z} \xrightarrow{\phi  \otimes ^L \psi  } F\otimes ^L G[i+j] .$  

\subsubsection{The case of a site fibered over a simplicial presheaf}

Let $\Volo $ be a simplicial presheaf on $\mcal{C}$.
We are interested in the site $\mcal{C}/\Volo $.
Denote by $\Delta $ the simplicial presheaf defined by $(X,n,\alpha ) \mapsto \Delta ^n $.
The projection $\Delta \to \mathrm{pt}$ induces a quasi-isomorphism $\bbZ \otimes \Delta \to \bbZ $. 

For integers $0\le k\le l\le n$, we denote by $[k,l]\subset [n]$ the subset $\{ k,k+1\dots ,l \} $. 
By abuse of notation, let the same symbol also denote the inclusion map $[l-k]\hookrightarrow [n]$ onto it.
The complex $\bbZ \otimes \Delta $ has the coalgebra structure (the Alexander-Whitney map)
$D\colon \bbZ \otimes \Delta \to (\bbZ \otimes \Delta )\otimes (\bbZ \otimes \Delta )$ 
by which $\theta \in \Delta ^n_m$ is mapped to the sum:
\[  \sum _{\begin{subarray}{c}p,q\ge 0\\ p+q=m\end{subarray}} (\theta \circ [0,p])
\otimes 
(\theta \circ [p,p+q])
\in 
\bigoplus _{\begin{subarray}{c}p,q\ge 0\\ p+q=m\end{subarray}} (\bbZ \otimes \Delta ^n_p)\otimes (\bbZ \otimes \Delta ^n_q) .  \]

Now suppose that $\mcal{C}$ is a category of schemes equipped with the Zariski topology (or finer), and that $\Delta $ is covered by two open simplicial subpresheaves $\Delta ^\circ $ and $\Delta ^*$. 
Set $\Delta \os := \Delta ^\circ \cap \Delta ^* $.
In this case we have a weak equivalence
\[ \mcal{Z}:=\cone \left( \bbZ \otimes \Delta \os \xrightarrow[(\mrm{incl.},\mrm{incl.} )]{} (\bbZ \otimes \Delta ^\circ ) \oplus (\bbZ \otimes \Delta ^* )\right) \xrightarrow[\mrm{incl.}\oplus (-\mrm{incl.})]{\sim } \bbZ \otimes \Delta . \]
The complex $\mcal{Z}$ has a coalgebra structure $D\colon \mcal{Z}\to \mcal{Z}\otimes \mcal{Z}$. Writing it down is equivalent to writing down the formula for the cup product, so we do the latter.
Let $\phi \colon \mcal{Z}\to F$ and $\psi \colon \mcal{Z}\to G$ be maps of complexes.
For each object $(X,n,\alpha )\in \mcal{C}/\Volo $ and degree $m$, the map $\phi $ gives the data:
\begin{eqnarray*} \theta \in \Delta ^{\circ }_m(X,\alpha ) 
&\mapsto &\phi ^\circ (X,\alpha ,\theta )\in F(X,\alpha )_m  \\%
\theta \in \Delta ^{* }_m(X,\alpha ) 
&\mapsto &\phi ^*(X,\alpha ,\theta )\in F(X,\alpha )_m  \\
\theta \in \Delta \os _m(X,\alpha ) 
&\mapsto &\phi \os (X,\alpha ,\theta )\in F(X,\alpha )_{m+1}  \end{eqnarray*}
(and similarly $\psi ^\circ (X,\alpha ,\theta )$, $\psi ^* (X,\alpha ,\theta ) $ and $\psi \os (X,\alpha ,\theta )$).
Then their cup product in $H^0(\mcal{C},F\otimes G)$ is represented by the data: 
\begin{eqnarray*} (\phi \cupp \psi )^\circ (X,\alpha ,\theta )
&=& \sum _{p+q=m} 
\phi ^\circ (X,\alpha ,\theta \circ [0,p] )\otimes \psi ^\circ (X,\alpha ,\theta \circ [p,p+q])  \\%
(\phi \cupp \psi )^* (X,\alpha ,\theta )
&=& \sum _{p+q=m} 
\phi ^* (X,\alpha ,\theta \circ [0,p] )\otimes \psi ^* (X,\alpha ,\theta \circ [p,p+q]) \end{eqnarray*}
\begin{multline*}
(\phi \cupp \psi )\os (X,\alpha ,\theta ) 
= \sum _{p+q=m}\biggl\{ (-1)^p \phi ^\circ (X,\alpha ,\theta \circ [0,p])\otimes \psi \os (X,\alpha ,\theta \circ [p,p+q]) \biggr.
\\%
\biggl. +
\phi \os (X,\alpha ,\theta \circ [0,p])\otimes \psi ^* (X,\alpha ,\theta \circ [p,p+q])  \biggr\} .
\end{multline*}

\subsection{Proper intersection lemmas}\label{Sec:PropInt}

The statement of the next lemma may appear to be a little involved, but its proof is easy. (The interested reader can try the $n=0$ case first.)
Below, we deduce some of its consequences which are useful in checking the well-definedness of cup products.

\begin{lemma}\label{Gamma2}
Let $X$ be an algebraic scheme and $V$ a closed subscheme of $X\times \square ^n$.
Let $G,H$ be functions on $X\times \square ^n$ which and whose restrictions to $V$ are nowhere
zero-divisors.
Assume further that $V$, $\Div (G)$, $\Div (H)$,
$V\cap \Div (G)$ and $V\cap \Div (H)$ satisfy the face condition in $X\times \square ^n$.
Then the function $H+t_{n+1}(G-H)$ on $X\times \square ^{n+1}$ and its restriction to $V\times \square ^1$ are nowhere zero-divisors, and the intersection $(V\times \square ^1)\cap \Div (H+t_{n+1}(G-H))$
satisfies the face condition.
\end{lemma}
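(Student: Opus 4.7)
The plan is to split the lemma into the two nowhere-zero-divisor statements and the face-condition statement. The former reduce immediately to Lemma \ref{Gamma1}, and the latter follows from a three-case analysis on how a face of $\square^{n+1}$ interacts with the new coordinate $t_{n+1}$.

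For the nowhere-zero-divisor part, I will apply Lemma \ref{Gamma1} with its ``$X$'' taken as $X\times\square^n$ and its ``$V$'' as the closed subscheme $V\subset X\times\square^n$, feeding in the functions $G$ and $H$ of the present setup. The hypothesis that $H$ and $H|_V$ are nowhere zero-divisors is exactly what Lemma \ref{Gamma1} requires, and its conclusion says precisely that $F:=H+t_{n+1}(G-H)$ is a nowhere zero-divisor on $(X\times\square^n)\times\bb{A}^1 = X\times\square^{n+1}$ and on $V\times\square^1$.

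For the face condition, every face of $\square^{n+1}$ has the form $F'\times F''$ with $F'\subset\square^n$ a face and $F''\subset\square^1$ one of $\square^1$, $\{0\}$, $\{1\}$. In the case $F''=\{0\}$, the restriction $F|_{t_{n+1}=0}=H$ identifies $((V\times\square^1)\cap\Div(F))\cap(F'\times\{0\})$ with $(V\cap\Div(H))\cap F'$, and the face condition assumed on $V\cap\Div(H)$ supplies the required codimension bound. The case $F''=\{1\}$ is identical using $F|_{t_{n+1}=1}=G$ and the face condition on $V\cap\Div(G)$. In the remaining case $F''=\square^1$, the intersection is contained in $(V\cap F')\times\square^1$, whose dimension the face condition on $V$ alone bounds sufficiently, since $\mathrm{codim}(F'\times\square^1)=\mathrm{codim}(F')$ in $\square^{n+1}$.

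I expect no substantive obstacle. The content of the lemma is simply that the linear interpolation between $H$ and $G$ behaves well on faces, which is transparent once one records the restrictions $F|_{t_{n+1}=0}=H$ and $F|_{t_{n+1}=1}=G$. The only point worth noting is to use the trivial dimension bound in the ``tube'' case $F''=\square^1$ rather than trying to apply Lemma \ref{Gamma1} to the potentially badly-behaved restriction $H|_{V\cap F'}$, about which no hypothesis is directly available.
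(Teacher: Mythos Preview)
Your treatment of the nowhere-zero-divisor assertion and of the two ``endpoint'' face cases $F''=\{0\}$ and $F''=\{1\}$ is correct and matches the paper. The gap is in the tube case $F''=\square^1$.

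Write $W:=(V\times\square^1)\cap\Div(F)$ and $c:=\mrm{codim}_{\square^n}(F')$. Since $F$ is a non-zero-divisor on $V\times\square^1$, we have $\dim W=\dim V$. Your containment $W\cap(X\times F'\times\square^1)\subset (V\cap(X\times F'))\times\square^1$ together with the face condition on $V$ gives only
\[
\dim\bigl(W\cap(X\times F'\times\square^1)\bigr)\le \dim(V)-c+1=\dim W-c+1,
\]
i.e.\ codimension $\ge c-1$, one short of what is required. The extra $+1$ from the $\square^1$ factor is not recovered by mere containment; you must use that $F$ is still a non-zero-divisor after restriction to this face.

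This is precisely the step you decided to avoid, and it is exactly what the paper does: it applies Lemma~\ref{Gamma1} to $X\times F'$ with closed subscheme $V_{|F'}$ and functions $G_{|F'},H_{|F'}$, concluding that $H_{|F'}+t_{n+1}(G-H)_{|F'}$ is a non-zero-divisor on $V_{|F'}\times\square^1$, hence its divisor there has the expected dimension $\dim V_{|F'}=\dim V-c$. The hypotheses that $\Div(H)$, $V$, and $V\cap\Div(H)$ satisfy the face condition are in the statement precisely so that $H_{|F'}$ and its restriction to $V_{|F'}$ remain non-zero-divisors, making Lemma~\ref{Gamma1} applicable; your remark that ``no hypothesis is directly available'' about $H|_{V\cap F'}$ overlooks this.
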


\subsubsection{Semi-simplicial schemes}\label{Semisimplicial}

In \S\S \ref{GlobalProjBund} and \ref{UnivChern} we are interested in the following situation.
Let $X_\bullet $ be a semi-simplicial scheme with flat face maps and $i\ge 1$ an integer. Let $L\super{a}$ be a line bundle on $X_\bullet $ given for each $a \in \{ 1,\dots ,i \} $ equipped with a section $\sigma \super{a}\in \Gamma (X_0,L\super{a}_0)$ which is everywhere a non zero-divisor.
Section \ref{general-procedure} gives meromorphic functions $F\super{a}_n:=F\super{\sigma \super{a}}_n$ on $X_n\times \square ^n$ and cycles
\[ \Gamma \super{a}_n:= \Div (F \super{L\super{a} ,\sigma \super{a} }_n)\in z^1(X_n,n) . \]
Given a subset $S\subset [n]$ with $s+1$ elements,
we denote again by $S$ the inclusion $[s]\hookrightarrow [n]$ onto $S$.
Write $F\super{a}(S )$ and $\Gamma \super{a}(S )$ for the pull-backs of $F\super{a}_s$ and $\Gamma _s\super{a}$ by the map
$X(S )\times \id _{\square ^s}$
: $X_n\times \square ^s\to X_s\times \square ^s $.
If we denote by $d _k \colon [s-1]\hookrightarrow [s]$ the face maps ($0\le k\le s$), the functions $F\super{a}(S )$ admit an inductive definition (on the size of $S$):
\begin{equation}\label{IotaInductive} 
	F\super{a}(S )=t_s\cdot (S \circ v_s^{[s]})_*\sigma \super{a} +(1-t_s)\left( 
	F\super{a}(S \circ d _s )(t_1,\dots ,t_{s-1})
\right) . \end{equation}

Lemma \ref{Lem:well-def2} in the main body of the article is a consequence of the following.

\begin{lemma}\label{Lem:ProperInt}
Keep the notation above and
let $m\ge 0$ be an integer.
Suppose that the Cartier divisors 
$\Gamma \super{a}(v_{k_a}^{[m]} )$ 
($a=1,\dots ,i$)  
on $X_m$
form a local complete intersection for every choice of indices
$0\le k_1\le \dots \le k_i\le m$.
Then for every choice of non-empty subsets
$S_1\le \dots \le S_i$ of $[m]$, the Cartier divisors on $X_m\times \square ^m$:
\[ \mrm{pr}_{S_a}^*\Gamma \super{a}(S_a)\hspace{30pt} a=1,\dots ,i \]
form a complete intersection, and the intersection satisfies the face condition.
Consequently, their intersection product belongs to $z^i(X_m,m)$.
\end{lemma}
\begin{proof}
If all $\# S_a -1$ are zero, the assertion is the same as the assumption.
The general case follows from the inductive formula \eqref{IotaInductive} and Lemma \ref{Gamma2}. This completes the proof.
\end{proof}

\subsubsection{Variant}\label{TwoOpenSets}
In \S \ref{UnivChern}, we are interested in a little more involved situation where each $X_n $ admits an open cover 
$X_n=X_n^\circ \cup X_n^* $ 
and the collections of schemes $(X_n^\circ )_n$, $(X_n^*)_n$ form semi-simplicial subschemes. Write $X_n\os := X_n^\circ \cap X_n^* $.
Suppose that we are given sections $\sigma \supero{a}\in \Gamma (X_0^\circ ,L_0)$ and $\sigma \supers{a}\in \Gamma (X_0^* ,L_0)$ ($1\le a\le i$) which are everywhere non zero-divisors.
Invariants associated with $\sigma \supero{a}$ are indicated by superscripts $(-)\supero{a}$,
and $\sigma \supers{a}$ by $(-)\supers{a}$.
The homotopy in equation \eqref{EqHomotopy} gives
\begin{equation*}
F\superos{a}_n:= F\super{\sigma \supero{a},\sigma \supers{a}}_n 
\quad\text{and}\quad 
\Gamma _n\superos{a}:=\Gamma \super{\sigma \supero{a},\sigma \supers{a}}_n.
\end{equation*}
For a subset $S\subset [m]$ with $s+1$ elements, let $F\superos{a}(S ) $ be the pull-back of $F\superos{a}_s $ by the map $X(S )\times \id  _{\square ^{s+1}} $ :
$X_m\times \square ^{s+1}\to X_s\times \square ^{s+1}$.

The proof of the following lemma is similar to the previous one.
One applies it to the simplicial schemes $X\times \Delta ^n$
with open covering in degree $m$:
$X\times \Delta ^n_m =\bigsqcup _{\theta \in \Delta ^n_m}\left( \left( X\setminus D \right) \cup \left( X_{\alpha ,\theta} ^* \right) \right)$
to verify the well-definedness of the cup product in formula \eqref{NonModXi^r}, \S \ref{TheComplZ}.

In the lemma, for a subset $S\subset [n]$
we denote by $\mrm{pr}_S$ the projection $\square ^n\to \square ^s $
to the $S(1),\dots ,S(s)$-th components.
For non-empty subsets $S$, $T$ of $[n]$, we write $S\le T$ to mean the relation: (the maximum element of $S$) $\le $ (the minimum element of $T$).
Denote by $S+1$ the subset $\{ k+1 \ |\ k\in S \}$ of $[n+1]$.

\begin{lemma}\label{Lem:ProperInt2}
Keep the notation above and let $b\in \{ 1,\dots ,i \} $.
Assume that the Cartier divisors 
$\Gamma \supero{a}(v_{k_a}^{ [m]} )$  
($a=1,\dots ,i$) 
on $X_m^\circ $  
form a local complete intersection for every choice of indices
$0\le k_1\le \dots \le k_i\le m$,
and the same holds for divisors $\Gamma \supers{a}(v_{k_a}^{[m]} )$ on $X_m^*$.
Assume moreover that the $i$ Cartier divisors on $X_m\os $:
\[ 
\Gamma \supero{a}(v_{k_a}^{[m]}  ) \ (1\le a\le b-1),\quad  \Gamma ^{(b)\clubsuit }(v_{k_b}^{[m]}), \quad 
\Gamma \supers{a} (v_{k_a}^{[m]}) \ (b+1\le a\le i)
\]
form a local complete intersection for every choice of $0\le k_1\le \dots \le k_i\le m$ and $\clubsuit \in \{ \circ ,* \}$.
Then for every choice of non-empty subsets
$S_1\le \dots \le S_i$ of $[m]$
and $k\in [m]$ with $S_b\le \{ k\} \le S_{b+1}$, the Cartier divisors on $X_m\os \times \square ^{m+1}$:
\[ \begin{array}{rlll}
\mrm{pr}^*_{S_a}\Gamma \supero{a}( S_a  ) 
\quad 1\le a\le b-1; 
&\mrm{pr}^*_{S_b\cup \{ k+1\} }\Gamma \superos{b}( S_b  ) ; \\%
\mrm{pr}^*_{S_a +1}\Gamma \supers{a} ( S_a ) \quad 
b+1\le a\le i\phantom{;}
\end{array}\]
form a local complete intersection, and the intersection satisfies the face condition.
Consequently, their intersection product belongs to $ z^i(X_m\os ,m+1)$.
\end{lemma}

\subsection{Bloch's specialization map \cite[p.292]{Bl86}}\label{BlochSp}

Here we give a precise argument to define the specialization map $\mrm{sp}_{L/k}$ in \S \ref{SecSpecialization}.
Let $L=k(x)$ be an extension with transcendence degree $1$ equipped with a basis $x$.
Then the presheaf $p_*z^i_{\rel ,k(x)}$ on $\MSm ^*/B\GL _r $ is contained in the following presheaf $p_*Z^i_{\rel ,k(x)}$:
\[ 
	\mfr{X}=((X,D),\lambda ,f;n,\alpha )\mapsto 
	\begin{cases} 
		p_*z^i_\rel (\mfr{X}\otimes _k k(x)) &\text{ if }D=\emptyset \\{} {} \\{}
		\displaystyle \frac{p_*z^i_\rel (\mfr{X}\otimes _k k[x]_{(x)})}{p_*z^{i-1}_\rel (\mfr{X})} &\text{ if }D\neq \emptyset ,
	\end{cases}   
\]
where we embed $p_*z^{i-1}_\rel (\mfr{X})$ into $p_*z^i_{\rel }(\mfr{X}\otimes _k {k[x]_{(x)}})$ by $\{ x=0 \} $.
We have an obvious scalar extension map $\mrm{res}_{k(x)/k}$: $p_*z^i_{\rel }\to p_*Z^i_{\rel ,k(x)}$.
We also consider the presheaf $p_*z^i_{\rel ,k[x]_{(x)}}$ defined by $\mfr{X}\mapsto p_*z^i_\rel (\mfr{X}\otimes _k {k[x]_{(x)}})$.

Now consider the sequence
\[
	0\to p_*z^{i}_\rel \xrightarrow{\{ x=0\} }
	p_*z^{i+1}_{\rel ,k[x]_{(x)}} \to
	p_*Z^{i+1}_{\rel ,k(x)} \to 0.
\]
For $\mfr{X}$ with $D\neq \emptyset $, this sequence is degreewise exact for a tautological reason.
If $D=\emptyset $, it is acyclic as a double complex by the localization theorem \cite[Theorem 0.1]{Bl94} (only known to be true  when $D=\emptyset $!).

The cycle $\Gamma _x=\{ 1+t(x-1)=0 \} $ in $\Spec (k(x)[t])$ represents $x\in k(x)^*=\CH ^1(\Spec (k(x)),1)$.
Denote its closure in $\Spec (k[x]_{(x)}[t])$ by $\bar{\Gamma }_x$.
The map 
$\bar{\Gamma }_x\cupp (-)\colon  p_*z^i_{\rel ,k(x)}\to p_*Z^{i+1}_{\rel ,k(x)}[-1]$
defined by
\[ 	V \mapsto 
	\begin{cases}
		\Gamma _x\cupp V &\text{ if }D=\emptyset \\
		\bar{\Gamma }_x\cupp V_{k[x]_{(x)}} &\text{ if }D\neq \emptyset 
	\end{cases}
 \]
is a well-defined map of complexes.
We define the specialization map $\mrm{sp}_{k(x)/k}\colon p_*z^i_{\rel ,k(x)}\to p_*z^i_{\rel }$ 
in $D(\MSm ^*/B\GL _r )$
by the zig-zag:
\[ \begin{array}{rcl}
	{}&p_*z^i_{\rel ,k(x)} \xrightarrow{\Gamma _x\cupp (-)} &p_*Z^{i+1}_{\rel ,k(x)} [-1]\\{}
	{}&{}&\downarrow \\{}
	p_*z^i_{\rel }\xrightarrow[\{ x=0 \} ]{\sim }
	&\mrm{cone}\Bigl( p_*z^{i+1}_{\rel ,k[x]_{(x)}} \to &p_*Z^{i+1}_{\rel ,k(x)}\Bigr) [-1] .
\end{array} \]
Of course, its composition with $\mrm{res}_{k(x)/k}$ gives the identity map on $p_*z^i_\rel $.
We leave it to the reader to verify this.

\begin{remark}
The specialization map depends on the choice of the transcendental basis.
For example, the specialization map
\begin{equation*}
\CH ^1(\Spec (k(x)),1)=k(x)^* \longrightarrow 
\CH ^1(\Spec (k),1)=k^*
 \end{equation*}
with respect to the basis $ax$ ($a\in k^*$) maps
$1/x$ to $a$.

\end{remark}

For a purely transcendental finitely generated extension $k(x_1,\dots ,x_r)/k$ with a chosen basis,
we define the specialization map by composition
\begin{equation*}    
	\mrm{sp}_{k(x_1,\dots ,x_r)/k}:=\mrm{sp}_{k(x_1)/k}\circ \dots \circ \mrm{sp}_{k(x_1,\dots ,x_r)/k(x_1,\dots ,x_{r-1})} .
	\end{equation*}
%
Beware that this map depends on the order on the transcendental basis. For example, the map $\mrm{sp}_{k(x)/k}\circ \mrm{sp}_{k(x,y)/k(x)}$:
\begin{equation*}
\CH ^1(\Spec (k(x,y)),1)=k(x,y)^*
\longrightarrow 
\CH ^1(\Spec (k),1)=k^*
 \end{equation*}
maps $ax+by$ ($a,b\in k^*$)
to $a$, while $\mrm{sp}_{k(y)/k}\circ \mrm{sp}_{k(x,y)/k(y)}$
maps it to $b$.

\end{document}